\newtheorem{theorem}{Theorem}[section]
\newtheorem{lemma}[theorem]{Lemma}
\newtheorem*{lemma*}{Lemma}
\newtheorem{proposition}[theorem]{Proposition}
\newtheorem{conjecture}[theorem]{Conjecture}
\newtheorem*{prop*}{Proposition}
\theoremstyle{definition}
\newtheorem{definition}[theorem]{Definition}
\theoremstyle{remark}
\newtheorem{remark}[theorem]{Remark}
\numberwithin{equation}{section}
\newcommand{\abs}[1]{\lvert#1\rvert}
\newcommand{\Abs}[1]{\left\lvert#1\right\rvert}
\newcommand{\norm}[1]{\lVert#1\rVert}
\newcommand{\C}{\mathbb{C}}
\newcommand{\HH}{\mathbb{H}}
\newcommand{\R}{\mathbb{R}}
\newcommand{\Z}{\mathbb{Z}}
\DeclareMathOperator{\dist}{dist}
\DeclareMathOperator{\re}{Re}
\DeclareMathOperator{\im}{Im}
\DeclareMathOperator{\diam}{diam}
\def\XXint#1#2#3{{\setbox0=\hbox{$#1{#2#3}{\int}$}
\vcenter{\hbox{$#2#3$}}\kern-.5\wd0}}
\def\le{\leqslant}
\def\ge{\geqslant}
\begin{document}
\baselineskip6mm

\title{Quasisymmetric graphs and Zygmund~functions}

\author{Leonid V. Kovalev}
\address{Department of Mathematics, Syracuse University, Syracuse, NY 13244, USA}
\email{lvkovale@syr.edu}
\thanks{Kovalev was supported by the NSF grant DMS-0968756. Onninen was supported by the NSF grant DMS-1001620.}

\author{Jani Onninen}
\address{Department of Mathematics, Syracuse University, Syracuse, NY 13244, USA}
\email{jkonnine@syr.edu}

\subjclass[2000]{Primary 30C62; Secondary 26A45}


\keywords{Quasiconformal maps, Zygmund functions, generalized variation}

\begin{abstract}
A quasisymmetric graph is a curve whose projection onto a line is a quasisymmetric map.
We show that this class of curves is related to solutions of the reduced Beltrami equation and to
a generalization of the Zygmund class $\Lambda_*$.  This relation makes it possible to use the tools of harmonic
analysis to construct nontrivial examples of quasisymmetric graphs and of  quasiconformal maps.
\end{abstract}

\maketitle

\section{Introduction}

Let $X$ and $Y$ be subsets of a Euclidean space $\R^n$. An embedding $f\colon X\to Y$ is quasisymmetric if there is a homeomorphism
$\eta\colon [0,\infty)\to [0,\infty)$ such that for any triple of distinct points $a,b,x\in X$
\begin{equation}\label{defqs}
\abs{f(x)-f(a)}\le \eta(t)\abs{f(x)-f(b)} \qquad \text{where } \quad t =\frac{\abs{x-a}}{\abs{x-b}}.
\end{equation}

We call a set $\Gamma\subset \C$ a \emph{quasisymmetric graph}  if the orthogonal projection of $\Gamma$ onto $\R$ is a quasisymmetric
homeomorphism between $\Gamma$ (with the metric induced from $\C$) and $\R$.
This should be compared to Lipschitz graphs, which can be defined by requiring the projection to be bi-Lipschitz,
a stronger property than quasisymmetry. For instance, we shall see that the graph of any function in the Zygmund class $\Lambda_*$ is quasisymmetric.

This paper has three main goals.
\begin{enumerate}[(I)]
\item\label{goal1} Parametrize quasisymmetric graphs by homeomorphic solutions of the reduced Beltrami equation;
\item\label{goal2} Use a generalization of the Zygmund class $\Lambda_*$ to construct quasisymmetric graphs;
\item\label{goal3} Use~\eqref{goal1} and~\eqref{goal2} to solve a problem from~\cite{KO} concerning the variation
of reduced quasiconformal maps.
\end{enumerate}

Our success in~\eqref{goal1} is partial in that we can parametrize only quasisymmetric graphs with small distortion. This is made precise with the concept of an $s$-quasisymmetric map introduced by Tukia and V\"ais\"al\"a~\cite{TV}. Namely, the map $f$ in~\eqref{defqs} is called $s$-quasisymmetric (where $s>0$ is a constant)
if $\eta$ can be chosen so that $\eta(t)\le t+s$ for $0\le t\le 1/s$. Observe that any quasisymmetric map
is $s$-quasisymmetric for large enough $s$. The term \emph{$s$-quasisymmetric graph} should be self-explanatory.

\begin{definition}\label{defqc}
A $W_{\rm loc}^{1,2}$-homeomorphism $f\colon \C\to\C$ is quasiconformal if there exists a constant $k\in [0,1)$ such that
\begin{equation}\label{qcmap}
\abs{f_{\bar z}} \le k\abs{f_z} \qquad \text{a.e. in $\C$.}
\end{equation}
\end{definition}
We sometimes refer to the constant $k$ in~\eqref{qcmap} by writing that $f$ is $k$-quasiconformal.
The images of circles and lines under a quasiconformal map are called quasicircles and quasilines, respectively. These curves
are ubiquitous in geometric function theory and still pose challenging problems~\cite{Ge, Me, Pr, Sm}.

Inequality~\eqref{qcmap} is a form of the Beltrami equation $f_{\bar z}=\nu(z)f_z$ where $\norm{\nu}_{L^\infty}<1$.
A closely related equation with $f_z$ replaced by $\re f_z$ (or $\im f_z$) arises from consideration of
elliptic PDE in the plane and generated considerable interest recently~\cite{AN,AIMbook,AJ,GIKMS,IKO2,Ja,KO}.
We state this \emph{reduced Beltrami equation} as an inequality, without an explicit coefficient $\nu$.

\begin{definition}\label{defrqc}
A nonconstant continuous $W_{\rm loc}^{1,2}$-mapping $f\colon \C\to\C$ is \emph{reduced quasiconformal} if there exists a
constant $k\in [0,1)$ such that
\begin{equation}\label{rqcmap}
\abs{f_{\bar z}} \le k\re f_z,\qquad  \text{a.e. in $\C$.}
\end{equation}
\end{definition}

Definition~\ref{defrqc} does not explicitly require $f$ to be a homeomorphism, but the injectivity of $f$ is a consequence of inequality~\eqref{rqcmap}~\cite[Corollary~1.5]{IKO}. In addition,
$f$ maps every horizontal line onto a graph over $\R$~\cite[Proposition 1.5]{IKO2} except for the degenerate case
\begin{equation}\label{triv}
f(z)=i\lambda z+b, \qquad \lambda\in\R , \quad b\in\C,
\end{equation}
when both sides of~\eqref{rqcmap} vanish identically.

We are now ready to state the result that achieves Goal~\eqref{goal1} for graphs of small distortion.

\begin{theorem}\label{smallk}
There exists a constant $s_0>0$ such that any $s$-quasisymmetric graph $\Gamma\subset \C$ with $s<s_0$ is the image of $\R$ under
a reduced quasiconformal mapping $f\colon \C\to \C$. Moreover, the constant $k$ in~\eqref{rqcmap} depends only on $s$ and
$k\to 0$ as $s\to 0$.
\end{theorem}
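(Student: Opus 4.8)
The plan is to translate the geometry into an analytic estimate on the graphing function, and then build the map by perturbing off the conformal (undistorted) case, where the reduced Beltrami equation degenerates to the Cauchy--Riemann system.

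Since the projection is a homeomorphism onto $\R$, I would write $\Gamma=\{x+ig(x):x\in\R\}$ with $g\in C(\R)$ and set $\gamma(x)=x+ig(x)$. First I would feed the symmetric triples $(x-h,\,x,\,x+h)$ into \eqref{defqs} applied to the projection. Because these project to equally spaced points, the inequality used in both directions ($a\leftrightarrow b$) forces the chord ratio $t=\abs{\gamma(x)-\gamma(x-h)}/\abs{\gamma(x)-\gamma(x+h)}$ to lie in $[1-s,(1-s)^{-1}]$. Writing $\Delta_-=g(x)-g(x-h)$ and $\Delta_+=g(x+h)-g(x)$, squaring $t$ and using $t^2=(h^2+\Delta_-^2)/(h^2+\Delta_+^2)$ gives $\abs{g(x+h)-2g(x)+g(x-h)}\cdot\abs{g(x+h)-g(x-h)}\le Cs\,(h^2+\Delta_+^2)$. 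Thus the second differences of $g$ satisfy a generalized Zygmund-type bound with seminorm $O(s)$ — the incarnation of the class promised in Goal~\eqref{goal2} — and its decay as $s\to0$ is exactly what will force $k\to0$.

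Next I would reformulate the target. Writing $f=u+iv$ and taking (as an ansatz) a real coefficient in \eqref{rqcmap}, the equation is equivalent to $u_x=\alpha v_y$, $u_y=-v_x$ with $\alpha=(1+\beta)/(1-\beta)$, $\abs\beta\le k$; eliminating $u$ shows $v$ solves the divergence-form equation $\operatorname{div}(\operatorname{diag}(1,\alpha)\nabla v)=0$, and $k=0$ (i.e. $\alpha\equiv1$) is precisely the Cauchy--Riemann system, whose only entire solutions are affine and send $\R$ to a line. This is the correct base point: for small $s$ the graph is $C^0$-close to a line, so I seek $f$ near $z$. Imposing $f(\R)=\Gamma$ does not fix the parametrization; it only requires an increasing homeomorphism $\psi=u(\cdot,0)$ with $v(x,0)=g(\psi(x))$, together with the compatibility $\alpha\,v_y=\psi'$ on $\R$ coming from $u_x=\alpha v_y$. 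Note that the naive harmonic extension is ruled out here: the harmonic extension of a Zygmund function has $\abs{\nabla v}$ growing like $\log(1/\abs{y})$ near $\R$, which \eqref{rqcmap} cannot tolerate (that inequality forces $\abs{\nabla v}\lesssim k\,v_y$ in such an ansatz). Consequently $\alpha$ — equivalently the foliation of $\C$ by the images of horizontal lines — must be taken essentially $1$ away from $\R$, where $f$ is almost conformal with tiny distortion, and genuinely adapted to $g$ in a thin layer around $\R$, where it absorbs the non-Lipschitz corner of the graph and restores $v\in W^{1,2}_{\mathrm{loc}}$ (matching normal derivatives across $\R$ so that no singular part appears).

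I would then close the construction by a fixed-point scheme on the pair $(\alpha,\psi)$: given a current $\psi$ near the identity and $\alpha$ near $1$, solve $\operatorname{div}(\operatorname{diag}(1,\alpha)\nabla v)=0$ with trace $v(\cdot,0)=g\circ\psi$, read off $v_y$ on $\R$, and update $\psi'$ and $\alpha$ to enforce simultaneously the Dirichlet condition and the co-normal condition $\alpha v_y=\psi'$. The reduced inequality then holds with $k$ comparable to $\norm{\alpha-1}_\infty$, which is in turn controlled by the generalized Zygmund seminorm of $g$, hence by $s$; this yields both $k=k(s)$ and $k\to0$ as $s\to0$. Injectivity of the resulting $f$ is free from \cite[Corollary~1.5]{IKO}, and surjectivity onto $\C$ follows from properness of a quasiconformal map of small distortion. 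The step I expect to be the main obstacle is closing this fixed point: the data live in Zygmund-type (generalized variation) spaces rather than Lipschitz ones, so I must show that the solution operator of the elliptic equation together with the induced Dirichlet-to-conormal map is bounded and contracts on those spaces. This is exactly where the harmonic-analysis tools advertised in the abstract — Poisson and Hilbert-transform estimates and second-difference inequalities — enter, and where the smallness of $s$ is consumed to obtain the contraction and the quantitative bound $k=k(s)\to0$.
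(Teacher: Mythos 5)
You identify the right obstructions (the naive parametrization $x+ig(x)$ cannot work; a reparametrization $\psi$ must be found), but the proposal stops exactly where the proof has to begin: you never close the fixed point, and that step is not a technical footnote --- it \emph{is} the theorem. Moreover, there are concrete reasons the scheme as formulated cannot even be set up. First, the conormal trace $v_y(\cdot,0)$ of a solution with Zygmund-type boundary data is in general not a function but a singular measure: in the correct statement (Proposition~\ref{conv}) the object playing the role of $\psi'$ is the distributional derivative $\mu=\frac{d}{dx}\re f(x)$, a doubling measure that is typically singular with respect to Lebesgue measure, and smallness of $s$ does not restore absolute continuity. Hence the update rule ``read off $v_y$ on $\R$ and set $\psi'=\alpha v_y$'' does not act on any function space in which a contraction could be formulated. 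Second, the ansatz that the coefficient is real (i.e. $u_y=-v_x$ exactly) is an unjustified restriction of the class of competitors: nothing shows that a map with $f(\R)=\Gamma$ exists in this subclass, and the extension the paper constructs does not belong to it --- its coefficient $F_{\bar z}/\re F_z$ is genuinely complex in the lower half-plane. Third, your Step~1 proves only the product inequality $\abs{g(x+h)-2g(x)+g(x-h)}\cdot\abs{g(x+h)-g(x-h)}\le Cs\,(h^2+\Delta_+^2)$; this does \emph{not} place $g$ in $\Lambda_*$ with seminorm $O(s)$ (deviation from an affine function is not parametrization-invariant), which is precisely why the paper formulates the Zygmund condition for a pair $(u,v)$ with $u'=\mu$ doubling and $v\in\Lambda_\mu$ rather than for the graph function alone.

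For comparison, the paper's proof is explicit and involves no iteration. By Theorems~\ref{tv54} and~\ref{tv26}, $\Gamma$ is a $k$-quasiline with $k$ small; let $f\colon\HH\to\Omega$ be a conformal map onto the domain above $\Gamma$ with $f(\infty)=\infty$, and extend by the Ahlfors--Weill formula~\eqref{ext1}, $F(z)=f(\bar z)+(z-\bar z)f'(\bar z)$ for $\im z<0$. Since then $F_z=f'(\bar z)$ and $F_{\bar z}=(z-\bar z)f''(\bar z)$, the whole theorem reduces to the single pointwise estimate~\eqref{suf2}, namely $2\im z\,\abs{f''(z)}\le \tilde k \re f'(z)$ in $\HH$. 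That estimate is where harmonic analysis enters, applied to the \emph{conformal} parametrization: one shows $\re f'>0$ by a maximum-principle argument, writes the Herglotz representation~\eqref{u1} whose boundary measure satisfies $\mu([a,b])=\re(f(b)-f(a))$ and is therefore doubling with small constant (because $x\mapsto\re f(x)$ is $s_1$-quasisymmetric), kills the linear term $\beta\im z$ by growth estimates for univalent functions, and concludes with Lemma~\ref{dbl}: for a doubling measure with small constant, $\Abs{\int_{\R}(t-z)^{-2}\,d\mu(t)}\le\varepsilon\int_{\R}\abs{t-z}^{-2}\,d\mu(t)$. The conformal parametrization is the decisive trick: it produces your unknown reparametrization for free as $\re f|_{\R}$ and converts the coupled free-boundary-type system you were trying to iterate into one linear estimate for a single analytic function. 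Any rescue of your scheme would, in effect, have to rediscover Lemma~\ref{dbl} and the Herglotz structure behind it.
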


It should be mentioned that even though $\Gamma$ has a natural quasisymmetric parametrization by $\R$ (the inverse of projection), this parametrization cannot in general be extended to a reduced quasiconformal mapping of $\C$. Instead
we use the parametrization that comes from the conformal map of upper half-plane onto the domain above $\Gamma$.

Our Goal~\eqref{goal2} is achieved by means of Theorem~\ref{suffgraph}. It employs the generalized Zygmund class
$\Lambda_\mu$ which is introduced in Definition~\ref{gzyg}.

\begin{theorem}\label{suffgraph}
Let $\mu$ be a doubling measure on $\R$. Let $u$ and $v$ be real functions on $\R$ such that $u'=\mu$
and $v\in \Lambda_\mu$. Then the image of $\R$ under the map
 $\Gamma(t) = u(t)+iv(t)$ is a quasisymmetric graph.

Furthermore, if the doubling constant of $\mu$ and the $\Lambda_\mu$-seminorm of $v$ are sufficiently small, then $\Gamma(\R)$ is an $s$-quasisymmetric graph where $s$ is small.
\end{theorem}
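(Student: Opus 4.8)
The plan is to reduce the statement to a property of classical Zygmund functions and then invoke the weak--quasisymmetry criterion of Tukia and V\"ais\"al\"a. First I would reparametrize the curve by its horizontal coordinate. Since $\mu$ is doubling, its distribution function $u$ (with $u'=\mu$) is a strictly increasing continuous bijection of $\R$, hence a homeomorphism, so I may put $w=u(t)$ and write $\Gamma(\R)$ as the honest graph $\gamma(w)=w+ig(w)$ of $g:=v\circ u^{-1}$. Under this substitution the orthogonal projection $\pi\colon\Gamma(\R)\to\R$ becomes $w+ig(w)\mapsto w$, with inverse the graph map $\gamma$; and, after the substitution $w=u(t)$, the hypothesis $v\in\Lambda_\mu$ amounts to the ordinary Zygmund bound $|g(w+h)+g(w-h)-2g(w)|\le Mh$ for $g$, with seminorm $M:=\|g\|_{\Lambda_*}$ controlled by $\|v\|_{\Lambda_\mu}$ and the doubling constant of $\mu$ (this is how $\Lambda_\mu$ is set up in Definition~\ref{gzyg}). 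It therefore suffices to prove that $\gamma$ is quasisymmetric, since $\pi=\gamma^{-1}$ is then quasisymmetric as well.

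The two quantitative inputs I would extract from the Zygmund bound are, first, a flatness estimate obtained by dyadic telescoping: over any interval $I$ the graph stays within $\tfrac{1}{2}M|I|$ of its chord, $\sup_{x\in I}|g(x)-L_I(x)|\le\tfrac{1}{2}M|I|$, where $L_I$ is the affine interpolant at the endpoints of $I$; and second, a cross-scale comparison of increments. Writing $A_k=g(w+2^k a)-g(w)$, the identity $A_{k+1}-2A_k=g(w+2^{k+1}a)+g(w)-2g(w+2^k a)$ is a symmetric second difference of half-width $2^k a$, so $|A_{k+1}-2A_k|\le M2^k a$; telescoping over the $n=\log_2(b/a)$ scales from $a$ up to $b$ gives $|g(w+b)-g(w)|\ge\frac{b}{a}\,|g(w+a)-g(w)|-\tfrac{1}{2}Mb\log_2(b/a)$. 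The point of the second estimate is that a large increment of $g$ at a short scale forces a proportionally large increment at longer scales, which is what tames the unbounded local slopes that a Zygmund function may have.

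With these in hand I would verify weak quasisymmetry of $\gamma$: given $w_1,w_2,w_3$ with $|w_1-w_2|\le|w_1-w_3|$, set $a=|w_1-w_2|\le b=|w_1-w_3|$ and bound the vertical increment $D_{12}=|g(w_1)-g(w_2)|$. In the nested configuration ($w_2$ between $w_1$ and $w_3$) the cross-scale estimate gives directly $D_{12}\le\frac{a}{b}D_{13}+\tfrac{1}{2}Ma\log_2(b/a)$, and since $x\log_2(1/x)$ is bounded on $(0,1]$ both terms are $\le(1+CM)\,|\gamma(w_1)-\gamma(w_3)|$; the opposite-side configuration is reduced to this one through the single symmetric second difference centred at $w_1$. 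This yields $|\gamma(w_1)-\gamma(w_2)|\le a+D_{12}\le(2+CM)\,|\gamma(w_1)-\gamma(w_3)|$, i.e. weak quasisymmetry with constant $H=2+CM$. As $\Gamma(\R)\subset\C$ and $\R$ are connected doubling spaces, the Tukia--V\"ais\"al\"a theorem~\cite{TV} upgrades weak quasisymmetry to quasisymmetry, proving the first assertion.

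The second, quantitative, assertion is where I expect the real work to lie. The soft passage from weak to genuine quasisymmetry does not by itself produce a control function of the form $\eta(t)\le t+s$ with small $s$. Instead I would revisit the estimates above for the projection $\pi$ and track all constants explicitly: a small doubling constant forces $u$ to be nearly affine, so that the classical and $\mu$--Zygmund seminorms are comparable with ratio near $1$, and a small $\|v\|_{\Lambda_\mu}$ then makes $M$ small. Every error term produced above, the flatness defect $\tfrac{1}{2}M|I|$ and the cross-scale defect $\tfrac{1}{2}Mb\log_2(b/a)$, is proportional to $M$; the delicate part is to show that, for a fixed ratio $t$ ranging over $[0,1/s]$, only a bounded band of scales contributes, so that these defects accumulate to at most $O(M)$ and give $\eta(t)\le t+s$ with $s=O(M)\to0$. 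Confirming this uniform-in-scale smallness, while keeping the possibly large but slowly varying slopes under control, is the main obstacle.
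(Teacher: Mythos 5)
Your proposal breaks down at its very first step: the claim that, after the substitution $w=u(t)$, the hypothesis $v\in\Lambda_\mu$ ``amounts to the ordinary Zygmund bound'' for $g=v\circ u^{-1}$ is false. The class $\Lambda_\mu$ controls only the \emph{nonlinearity} of $v$ against $\mu$-mass, not its increments; under the change of variables the symmetric second difference of $g$ at $(w,h)$ pulls back to a \emph{non-symmetric} second difference of $v$ (the point $u^{-1}(w)$ is not the Euclidean midpoint of $u^{-1}(w-h)$ and $u^{-1}(w+h)$), and the resulting error term is proportional to an increment of $v$, which for $\Lambda_\mu$-functions is not $O(\mu)$. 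Concretely, take $u(t)=t^3$, so that $d\mu=3t^2\,dt$ is doubling, and $v(t)=t$, which lies in $\Lambda_\mu$ with seminorm $0$ because its second differences vanish. The hypotheses of the theorem hold, yet $g(w)=v(u^{-1}(w))=w^{1/3}$ is not in $\Lambda_*$: its modulus of continuity is $\delta^{1/3}$, far exceeding the $O(\delta\log(1/\delta))$ bound that every Zygmund function satisfies. (This is exactly the example $y=x^{\nicefrac{1}{3}}$ mentioned in the Remark after the proof of Proposition~\ref{conv}: a quasisymmetric graph that is not the graph of any Zygmund function.) Everything after your first paragraph --- the flatness estimate, the cross-scale telescoping, the weak-quasisymmetry verification, the upgrade via~\cite{TV} --- correctly proves only the special case where $\mu$ is Lebesgue measure, i.e.\ that graphs of classical Zygmund functions are quasisymmetric; it does not touch the general statement, and no amount of constant-tracking in your final paragraph can repair this, since the counterexample above has $\Lambda_\mu$-seminorm $0$.

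The repair is to avoid reparametrizing altogether, which is what the paper does: since $u$ is quasisymmetric (by doubling), one may study the map $t\mapsto\Gamma(t)=u(t)+iv(t)$ in place of the projection, and compare \emph{both} coordinates with affine functions of $t$ on each interval $[a,b]$. The growth estimate~\eqref{dbl2} for doubling measures gives $\sup_{[a,b]}\abs{u-u_{ab}}\le C(\delta)\,(u(b)-u(a))$, while~\eqref{gzyd2} gives $\sup_{[a,b]}\abs{v-v_{ab}}\le \norm{v}_{\Lambda_\mu}(u(b)-u(a))$; the crucial structural point your reduction misses is that both deviations are measured against the \emph{horizontal} displacement $u(b)-u(a)$, so no control of $\abs{v(b)-v(a)}$ is ever needed. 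When $\delta$ and $\norm{v}_{\Lambda_\mu}$ are small, V\"ais\"al\"a's affine-approximation criterion (Theorem~\ref{va39}) then yields $s$-quasisymmetry of $\Gamma$ with small $s$ --- this is precisely the quantitative second assertion that your sketch leaves open. Without the smallness assumption, the same two estimates give weak quasisymmetry of $\Gamma$ by a short case analysis ($a\le x\le b$, using doubling once more, and $x<a<b$), and genuine quasisymmetry follows from~\cite[Theorem 10.19]{He}.
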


Theorems~\ref{smallk} and~\ref{suffgraph} from the basis for the proof of our third main result. To state it,
let $\Phi_q\colon [0,\infty)\to [0,\infty)$ be any convex increasing function such that
\begin{equation}\label{gauge}
\Phi_q(t)=\frac{t}{(\log 1/t)^q}\qquad \text{for small $t$.}
\end{equation}
We refer to Definition~\ref{vardef} for the notion of $\Phi$-variation.

\begin{theorem}\label{varrqc}
There exists a reduced quasiconformal mapping $f\colon \C\to\C$ whose restriction to the line segment $[0,1]$
has infinite $\Phi_q$-variation for every $0<q<1$.
\end{theorem}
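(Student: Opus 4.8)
The plan is to produce $f$ through Theorems~\ref{suffgraph} and~\ref{smallk}, so that the whole problem reduces to a one–variable construction. First I would record two elementary facts about the $\Phi$-variation of Definition~\ref{vardef}: it is invariant under homeomorphic reparametrization, since the partition sums range over the same finite subsets of the image curve; and for a path $\Gamma(t)=u(t)+iv(t)$ one has $\abs{\Gamma(t')-\Gamma(t)}\ge\abs{v(t')-v(t)}$, so by monotonicity of $\Phi_q$ the $\Phi_q$-variation of any subarc of $\Gamma$ dominates that of the corresponding piece of $v$. Consequently it suffices to construct a doubling measure $\mu$ with small doubling constant and a function $v\in\Lambda_\mu$ with small $\Lambda_\mu$-seminorm whose Euclidean $\Phi_q$-variation is infinite on \emph{every} subinterval, for every $q\in(0,1)$. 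Granting this, Theorem~\ref{suffgraph} makes $\Gamma(\R)$ an $s$-quasisymmetric graph with $s<s_0$, Theorem~\ref{smallk} yields a reduced quasiconformal $f$ with $f(\R)=\Gamma(\R)$, and since $f([0,1])$ is a nondegenerate subarc it inherits infinite $\Phi_q$-variation.

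Next I would explain why the gauge~\eqref{gauge} forces us out of the classical Zygmund class. A function in $\Lambda_*$ has modulus of continuity $O(\delta\log(1/\delta))$, but for the canonical examples (lacunary or self-affine, such as the Weierstrass or Takagi functions) the increments over a fine partition are only of the root-mean-square order $\delta(\log(1/\delta))^{1/2}$; optimizing over the partition scale then caps the $\Phi_q$-variation at the order $(\log(1/\delta))^{1/2-q}$, which is finite precisely when $q\ge 1/2$. To reach the full range $q<1$ I would exploit the extra freedom of the generalized class $\Lambda_\mu$ of Definition~\ref{gzyg}: a singular $\mu$, produced by a dyadic multiplicative cascade with splitting ratios $\tfrac12\pm\epsilon$ (doubling with constant close to $2$, yet mutually singular with Lebesgue measure by Kakutani's theorem), decouples the $\mu$-scale governing $\Lambda_\mu$-smoothness from Euclidean length. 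On the $\mu$-heavy cells $v$ may then carry Euclidean oscillation comparable to the cell's $\mu$-mass, far larger than its Euclidean size, while keeping its $\Lambda_\mu$-seminorm small.

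The heart of the argument is to build $v$ on this cascade so that its Euclidean increments \emph{saturate} the extremal modulus $\delta\log(1/\delta)$ on a positive $\mu$-proportion of cells at every scale. The decisive feature is coherence across scales: along the cascade the sign of the oscillation is inherited from the parent cell, so the scale-by-scale contributions to an increment add with a common sign instead of cancelling in central-limit fashion. With this coherence a partition into $\mu$-dyadic cells of generation $N$ (Euclidean size $\delta$) has a positive fraction of increments of size $\gtrsim\delta\log(1/\delta)$, whence $\sum_i\Phi_q(\abs{v(x_{i+1})-v(x_i)})\gtrsim(\log(1/\delta))^{1-q}\to\infty$ for every $q<1$ as $N\to\infty$. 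Self-similarity of the cascade makes the same estimate hold on every subinterval.

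The hard part will be precisely this saturation—forcing the increments up to the maximal order $\delta\log(1/\delta)$ on a set of positive measure, thereby beating the $(\log(1/\delta))^{1/2}$ barrier that confines classical lacunary constructions to $q<1/2$. I expect the delicate point to be simultaneously (i) arranging the cross-scale coherence strongly enough to realize this lower bound on a positive $\mu$-proportion, and (ii) keeping both the doubling constant of $\mu$ and the $\Lambda_\mu$-seminorm of $v$ below the thresholds demanded by Theorems~\ref{suffgraph} and~\ref{smallk}; these requirements pull in opposite directions, stronger oscillation against smaller seminorm, so the quantitative bookkeeping in the cascade—the choice of $\epsilon$ and of the per-scale amplitudes—will be the technical core. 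Once the parameters are fixed, verifying $v\in\Lambda_\mu$ and the doubling property of $\mu$ reduces to routine cascade estimates.
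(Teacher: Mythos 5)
Your overall architecture is exactly the paper's: the paper proves precisely the one-variable statement your first paragraph reduces to (Theorem~\ref{hard}: a doubling measure $\mu$ with small constant and a function of class $\Lambda_\mu$ with infinite $\Phi_q$-variation on \emph{every} interval, $0<q<1$), then scales that function down and feeds it to Theorems~\ref{suffgraph} and~\ref{smallk}; moreover the measure it builds is the very multiplicative cascade you propose (a $4$-adic Riesz product $\prod_k(1+\gamma\rho_{2k-1})$, singular and doubling with small constant), and Proposition~\ref{known} confirms your observation that the classical class $\Lambda_*$ cannot reach $q>\nicefrac{1}{2}$.

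The gap is in your third paragraph, and it is not merely that the construction is deferred: the target you set for it cannot be met. Suppose $\mu$ is singular (your Kakutani cascade is, as you note) and $v\in\Lambda_\mu$ with seminorm $M$. For $x\in[0,1]$ let $s_N(x)$ be the slope of the affine interpolant of $v$ on the dyadic cell $I_N(x)\ni x$ of length $2^{-N}$. By~\eqref{gzyd2}, passing from a cell to its child changes this slope by at most $4M\mu(I_k(x))/\abs{I_k(x)}$, whence
\[
\abs{s_N(x)}\le \abs{s_0}+4M\sum_{k=0}^{N-1}\frac{\mu(I_k(x))}{\abs{I_k(x)}}.
\]
The dyadic densities $D_k(x)=\mu(I_k(x))/\abs{I_k(x)}$ form a nonnegative martingale with respect to Lebesgue measure, converging a.e.\ to the density of the absolutely continuous part of $\mu$, which is $0$ since $\mu$ is singular. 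Hence the Ces\`aro means $\frac1N\sum_{k<N}D_k(x)$ tend to $0$ for Lebesgue-a.e.\ $x$, and so $\Abs{\{x\colon \abs{s_N(x)}\ge cN\}}\to 0$ for every $c>0$. Since all cells of a given generation have the same length, this says exactly that the counting fraction of scale-$\delta$ cells with increment $\gtrsim \delta\log(1/\delta)$ tends to zero: saturation on a positive fraction of cells is impossible, no matter how you arrange ``coherence across scales''. Reading your claim instead as a positive \emph{$\mu$-fraction} of cells does not help, because such cells are too few: in the cascade a $\mu$-typical cell at scale $\delta=4^{-2m}$ has mass roughly $\delta e^{cm}$, so there are roughly $\delta^{-1}e^{-cm}$ of them, and even if each carried an increment of full size $\delta e^{cm}$ one gets $\delta^{-1}e^{-cm}\,\Phi_q(\delta e^{cm})=O(m^{-q})\to 0$; your displayed bound $(\log(1/\delta))^{1-q}$ genuinely requires a positive counting fraction.

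What rescues the theorem---and is what the paper does---is that no pointwise saturation is needed: since $\Phi_q$ is convex, Jensen's inequality lets you run your closing arithmetic with the \emph{average} increment in place of a positive fraction of large ones. The paper proves the $L^1$ lower bound $\sum_j\abs{\Delta_j g}\ge\int_0^1\abs{g_m'}\ge\tfrac14\int_0^1 v_m^*\ge cm$ at scale $\delta=4^{-2m}$ (see~\eqref{gmder1}, \eqref{gmder3} and Lemma~\ref{maxlem}), where $v_m^*$ is the maximal function of the Riesz products; its integral grows like $m$ through an $L\log L$ mechanism---a hierarchy of rare cells carrying exponentially large increments, quantified by Bernstein's inequality and Stein's lower bound for the maximal function---not through a positive proportion of cells with increments of size $\delta m$. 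Then $\sum_j\Phi_q(\abs{\Delta_j g})\ge 4^{2m}\Phi_q\big(cm\,4^{-2m}\big)\sim m^{1-q}\to\infty$. So your plan becomes a correct proof once the saturation goal is replaced by this average bound plus convexity; as written, the ``technical core'' you defer is aimed at a statement that is false.
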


This result was previously known only for $q<\nicefrac{1}{2}$~\cite[Remark 4.1]{KO}. On the other hand, for $q>1$ every
reduced quasiconformal map has finite $\Phi_q$-variation on line segments~\cite[Theorem 1.7]{KO}. The borderline
case $q=1$ remains open. Using the additivity of reduced quasiconformal maps, one can strengthen the conclusion of Theorem~\ref{varrqc} by replacing one line segment with an arbitrary countable set of lines. See~\cite{KO} for details. The size of such exceptional sets for Sobolev and quasiconformal maps was recently studied in~\cite{BMT}.

We do not know if the restriction $s<s_0$ is necessary in Theorem~\ref{smallk}. The converse statement holds without
such restrictions.

\begin{proposition}\label{conv} If $f\colon \C\to \C$ is a reduced quasiconformal map which is not of the form~\eqref{triv},
then $f(\R)$ is an $s$-quasisymmetric graph with $s=s(k)\to 0$ as $k\to 0$. Here $k$ is the constant in~\eqref{rqcmap}.
In addition,
\begin{equation}\label{addition}
\im f\big|_{\R}\in \Lambda_\mu \qquad\text{where  } \mu=\frac{d}{dx}\re f(x).
\end{equation}
\end{proposition}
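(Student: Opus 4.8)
The plan is to reduce the proposition to Theorem~\ref{suffgraph}. Write $f|_{\R} = u + iv$ with $u = \re f|_{\R}$ and $v = \im f|_{\R}$. Since the degenerate case~\eqref{triv} is excluded, \cite[Proposition 1.5]{IKO2} makes $u$ a strictly increasing homeomorphism of $\R$; hence $\Gamma = f(\R)$ is a genuine graph over $\R$ and $\mu = du$ is a well-defined locally finite positive measure, so~\eqref{addition} is meaningful and $u$ is an antiderivative of $\mu$, as Theorem~\ref{suffgraph} requires. Both assertions of the proposition then follow once we prove: \textup{(A)} $\mu$ is doubling, with doubling constant close to that of Lebesgue measure when $k$ is small; and \textup{(B)} $v \in \Lambda_\mu$, with $\Lambda_\mu$-seminorm small when $k$ is small. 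Indeed, (B) is exactly~\eqref{addition}, while (A) and (B) fed into Theorem~\ref{suffgraph} give that $f(\R)$ is an $s$-quasisymmetric graph with $s$ small for small $k$.

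To reach (A) and (B) I would first convert~\eqref{rqcmap} into pointwise bounds. From $f_{\bar z} = \tfrac12[(u_x - v_y) + i(u_y + v_x)]$ and $\re f_z = \tfrac12(u_x + v_y)$, inequality~\eqref{rqcmap} is equivalent to $\sqrt{(u_x - v_y)^2 + (u_y + v_x)^2} \le k(u_x + v_y)$. This forces $u_x$ and $v_y$ to be positive and comparable with ratio tending to $1$ as $k \to 0$, and forces the smallness bound $|u_y + v_x| \le k(u_x + v_y)$. Eliminating $v$ through the compatibility relation $v_{xy} = v_{yx}$ exhibits $u$ as a solution of a uniformly elliptic equation $\operatorname{div}(A\nabla u) = 0$ whose coefficients converge to those of the Laplacian as $k \to 0$, with $v$ the associated conjugate (stream) function.

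With this dictionary, (A) follows from the boundary-comparison (elliptic boundary Harnack) estimates for $\operatorname{div}(A\nabla u) = 0$ on the upper half-plane $\HH$, which make the boundary derivative $\mu = du$ doubling; the smallness of the doubling constant is the perturbative statement that these comparison constants converge to the harmonic ones as $A \to I$. For (B) one estimates the second difference $v(x+h) - 2v(x) + v(x-h)$ against $\mu([x-h,x+h])$: at $k = 0$ the map is conformal, $v$ is the harmonic conjugate of $u$, the graph is a straight line, and the second difference vanishes, so the deviation is governed by the term $|u_y + v_x| \le k(u_x + v_y)$ and should produce a $\Lambda_\mu$-seminorm bounded by a constant times $k$.

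I expect (B) to be the main obstacle. The doubling in (A) is robust and its smallness is a soft continuity statement in the ellipticity, but $\Lambda_\mu$-membership is a second-order smoothness condition, and extracting a seminorm that is genuinely small with $k$---not merely finite---requires carefully propagating the interior bound $|u_y + v_x| \le k(u_x + v_y)$ to the boundary second differences of $v$ weighted by $\mu$. A convenient reduction is to parametrize $\R$ by $u$ itself: because $\mu = du$ is doubling, the substitution $x \mapsto u(x)$ turns $\Lambda_\mu$ into the classical Zygmund class $\Lambda_*$ and $\mu$ into Lebesgue measure, so (B) becomes a standard Zygmund estimate for the height of $f(\R)$ over its horizontal coordinate.
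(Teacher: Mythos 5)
Your overall architecture---prove (A) that $\mu$ is doubling and (B) that $\im f|_{\R}\in\Lambda_\mu$, then feed these into Theorem~\ref{suffgraph}---is a legitimate reduction, and it is genuinely different from the paper, which proves quasisymmetry of the projection directly and obtains (A) and (B) as byproducts. Your pointwise reformulation of~\eqref{rqcmap} and the link to a non-symmetric equation $\operatorname{div}(A\nabla u)=0$ are also correct (cf.\ the Alessandrini--Nesi reference). The problem is that neither of your proposed proofs of the ingredients works. For (A), boundary Harnack and elliptic-measure doubling theorems do not apply: $\mu=d(u|_\R)$ is not the elliptic measure of the operator, it is the differential of the boundary trace of one particular solution, and the increments you would need to compare, $u(\cdot+h)-u(\cdot)$ and $u(\cdot)-u(\cdot-h)$, are \emph{not} solutions of any common equation, because $A$ is not translation invariant ($u(\cdot+h)$ solves the equation with coefficients $A(\cdot+h)$). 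The translation-invariant case $A=I$ is exactly $k=0$, where $f$ is affine and there is nothing to prove. What actually makes (A) provable is a linear structure invisible in the scalar formulation: the paper uses the family $f_\lambda(z)=f(z)+i\lambda z$, which satisfies~\eqref{rqcmap} with the \emph{same} constant $k$ (adding $i\lambda z$ changes neither $f_{\bar z}$ nor $\re f_z$), hence is quasisymmetric uniformly in $\lambda$; choosing $\lambda=-\im\frac{f(x)-f(b)}{x-b}$ for each triple makes $f_\lambda(x)-f_\lambda(b)$ real, and taking real parts shows $\re f|_\R$ is quasisymmetric, whence $\mu$ is doubling by~\cite[Remark~13.20b]{He}.

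For (B), beyond the fact that your perturbative argument is only a heuristic, the concluding reduction rests on a false claim: the substitution $x\mapsto u(x)$ does \emph{not} convert $\Lambda_\mu$ into $\Lambda_*$. Writing $V=v\circ u^{-1}$ and using the equivalent form~\eqref{gzyd2}, the condition $V\in\Lambda_*$ reads
\[
\Abs{\,v(x)-v(a)-\frac{u(x)-u(a)}{u(b)-u(a)}\bigl(v(b)-v(a)\bigr)}\le M\bigl(u(b)-u(a)\bigr),\qquad a<x<b,
\]
whereas $v\in\Lambda_\mu$ controls the deviation of $v$ from the interpolant $v_{ab}$ of~\eqref{gab}, which is affine in $x$, not in $u(x)$. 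The two conditions differ by the term
\[
\abs{v(b)-v(a)}\,\Abs{\frac{u(x)-u(a)}{u(b)-u(a)}-\frac{x-a}{b-a}},
\]
and when $u$ is genuinely non-affine the second factor is of unit order on suitable triples, so equivalence would force a Lipschitz-type bound $\abs{v(b)-v(a)}\le C\mu([a,b])$. Functions in $\Lambda_\mu$ do not satisfy this: such a bound would make them of bounded variation, contradicting Theorem~\ref{hard}. (The whole point of introducing $\Lambda_\mu$ rather than working with $\Lambda_*$ after a change of variable is that the two classes are not interchangeable.) By contrast, the paper gets~\eqref{addition} in one line from the same $f_\lambda$ trick: take imaginary parts of the quasisymmetry inequality and set $x=\frac{a+b}{2}$. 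Note also that your route to the first assertion requires the $\Lambda_\mu$-seminorm to be \emph{small} as $k\to0$; this is true, but it needs a two-sided refinement of the quasisymmetry argument (applying it with the roles of $a$ and $b$ exchanged and adding the resulting inequalities), not the elliptic perturbation theory you invoke, and the paper's proof avoids needing it altogether.
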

 
This leads to a conjecture.

\begin{conjecture}\label{niiice}
The images of $\R$ under reduced quasiconformal maps $\C\to\C$ are precisely quasisymmetric graphs and vertical lines.
\end{conjecture}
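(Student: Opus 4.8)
The statement asserts the equality of two families, so I would prove the two inclusions separately. The inclusion that every reduced quasiconformal image of $\R$ is a quasisymmetric graph or a vertical line is essentially already in hand. If $f$ is not of the degenerate form~\eqref{triv}, Proposition~\ref{conv} gives that $f(\R)$ is an $s$-quasisymmetric graph, hence a quasisymmetric graph. If $f$ is of the form~\eqref{triv} with $\lambda\neq 0$, then $f(\R)=\{b+i\lambda x:x\in\R\}$ has constant real part and so is a vertical line, while $\lambda=0$ is excluded because $f$ is required to be nonconstant. Thus the first inclusion costs only the observation that the single degenerate case produces exactly the vertical lines.

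The reverse inclusion is the substantive part. Vertical lines are again realized by~\eqref{triv} (take $f(z)=iz+c$), so what remains is to realize an \emph{arbitrary} quasisymmetric graph as $f(\R)$ for some reduced quasiconformal $f$. Theorem~\ref{smallk} already does this when the graph is $s$-quasisymmetric with $s<s_0$, so the whole difficulty is to remove the smallness restriction. I would work through the dictionary furnished by Theorem~\ref{suffgraph} and Proposition~\ref{conv}: up to reparametrization a quasisymmetric graph is the curve $\Gamma(t)=u(t)+iv(t)$ with $u'=\mu$ a doubling measure and $v\in\Lambda_\mu$, and the reduced quasiconformal images correspond to those pairs $(\mu,v)$ matched by a global solution of~\eqref{rqcmap} with some $k<1$. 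The goal is therefore to show that every such admissible pair, not merely those with small doubling constant and small seminorm, is realizable.

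My first attempt would be a continuity method, showing that the relevant parameter set is both open and closed. Connect the target to the real line by a one-parameter family $\Gamma_\tau$, $\tau\in[0,1]$, with $\Gamma_0=\R$ and $\Gamma_1=\Gamma$, for instance by interpolating the pair linearly as $(\mu_\tau,v_\tau)=((1-\tau)\,dx+\tau\mu,\ \tau v)$, which stays within doubling measures and shrinks the seminorm to zero as $\tau\to 0$. Let $E\subset[0,1]$ be the set of parameters for which $\Gamma_\tau$ is a reduced quasiconformal image. Then $0\in E$ trivially, Theorem~\ref{smallk} places a neighborhood of $0$ inside $E$, and a perturbation of the welding construction coming from the conformal map of the upper half-plane onto the region above $\Gamma_\tau$ should render $E$ open. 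Closedness would follow from a normal-family argument: a sequence $f_{\tau_j}$ obeying a uniform bound $k_{\tau_j}\le k_0<1$ is precompact and its limit again solves~\eqref{rqcmap}, so a uniform distortion bound along the family would force $E=[0,1]$.

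The main obstacle is precisely this uniform control of the distortion, and the natural shortcut fails here. One is tempted to reach large distortion through the additivity of reduced quasiconformal maps, since $\abs{(f_1+f_2)_{\bar z}}\le\max(k_1,k_2)\,\re(f_1+f_2)_z$ keeps the constant bounded by the maximum of the summands. But on the base-measure side this additivity is powerless: if $\mu=\sum_j\mu_j$ with each $\mu_j$ of small doubling constant, then $\mu$ again has small doubling constant, so a genuinely large-distortion base map $u$ can never be assembled from small-distortion pieces. Consequently the hard instance is already present in the flat case $v\equiv 0$, where the question is whether every quasisymmetric self-map of $\R$ is the real trace of a reduced quasiconformal map. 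This is exactly where I expect a Beurling--Ahlfors-type extension adapted to the reduced equation to be needed, and where the bound $k<1$ must be produced by hand rather than inherited from the small-$s$ theory. Absent such a bound the family could degenerate with $k_\tau\to 1$, and it is this degeneration, which the conjecture implicitly forbids, that I regard as the crux of the problem.
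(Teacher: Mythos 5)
You were asked to prove Conjecture~\ref{niiice}, which the paper leaves open: there is no proof of it in the paper to compare against, and the authors say explicitly that they do not know whether the restriction $s<s_0$ in Theorem~\ref{smallk} can be removed --- that restriction is exactly the gap between what is proved and what the conjecture asserts. Your easy inclusion is correct and matches the paper's state of knowledge: Proposition~\ref{conv} handles every map not of the form~\eqref{triv}, and the degenerate family~\eqref{triv} with $\lambda\neq 0$ yields precisely the vertical lines ($\lambda=0$ being excluded since a reduced quasiconformal map is nonconstant, and conversely $f(z)=iz+c$ realizes any vertical line). You were also right that the converse for graphs of large distortion is the substantive open problem, and right that the crux is a uniform bound $k\le k_0<1$ that nothing in the paper supplies. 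So the honest verdict is: not a proof, and correctly not claimed to be one.

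That said, your sketch of the hard direction has three concrete defects worth recording. First, the interpolation $\mu_\tau=(1-\tau)\,dx+\tau\mu$, $v_\tau=\tau v$ does not behave as you claim: when $\mu$ is singular (and the interesting doubling measures, e.g.\ the Riesz-product-type measure of Theorem~\ref{hard}, are singular), then for every fixed $\tau>0$ there are fine scales on which $\tau\mu$ dominates the Lebesgue part; on those scales the doubling constant of $\mu_\tau$ is essentially that of $\mu$, and from $\mu_\tau\ge\tau\mu$ one gets only $\tau\abs{v(x+h)-2v(x)+v(x-h)}\le M\,\mu_\tau([x-h,x+h])$ with the \emph{same} $M$, so $\norm{v_\tau}_{\Lambda_{\mu_\tau}}$ need not shrink as $\tau\to 0$. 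Consequently Theorem~\ref{smallk} does not place a neighborhood of $0$ inside your set $E$; you get only the point $\tau=0$, and the continuity method never starts. Second, your ``dictionary'' is one-directional: Theorem~\ref{suffgraph} is a sufficiency statement and Proposition~\ref{conv} applies only to curves already known to be reduced quasiconformal images; the paper nowhere shows that an arbitrary quasisymmetric graph admits a reparametrization $u+iv$ with $u'=\mu$ doubling and $v\in\Lambda_\mu$ --- indeed the remark about $y=x^{\nicefrac{1}{3}}$ warns that the natural parametrization fails~\eqref{addition}, so the existence of a good reparametrization is itself part of what the conjecture asks. Third, your reduction to the ``flat case $v\equiv 0$'' conflates realizing the image as a \emph{set} with prescribing the boundary trace: for $v\equiv 0$ the image is $\R$, realized by the identity, so the flat case of the conjecture is trivial; whether every quasisymmetric self-map of $\R$ is the real trace of a reduced quasiconformal map is a different problem, neither necessary for the conjecture nor shown by you to imply it. The paper's own method in Theorem~\ref{smallk} deliberately sidesteps this by using the conformal parametrization of the domain above $\Gamma$ rather than any prescribed trace, and any serious attack on the conjecture would likely have to do the same.
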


Parametrization of Lipschitz graphs is much easier to achieve. They corresponds to delta-monotone maps, which are defined as follows. 
A map $f \colon \C \to \C$ is delta-monotone if there exists a constant $\delta>0$ such that
\[
\re \frac{f(z)-f(\zeta)}{z-\zeta}\ge \delta \, \frac{\abs{f(z)-f(\zeta)}}{\abs{z-\zeta}} \qquad \text{for all distinct }z,\zeta\in \C.
\]
This is a proper subclass of reduced quasiconformal maps~\cite{IKO2}.

\begin{proposition}\label{prodelta}
The images of $\R$ under nonconstant delta-monotone maps $\C\to\C$ are precisely Lipschitz graphs.
\end{proposition}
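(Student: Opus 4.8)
The plan is to prove the two inclusions separately: that the image $f(\R)$ of a nonconstant delta-monotone map is a Lipschitz graph, and conversely that every Lipschitz graph arises this way. The first inclusion is essentially a restriction of the defining inequality to $\R$, while the second calls for an explicit construction. Throughout I use that a Lipschitz graph is the same thing as the graph of a Lipschitz function $g\colon\R\to\R$, which is immediate from the bi-Lipschitz characterization of the projection.

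For the forward direction I would fix real $x>y$ and apply the delta-monotone inequality with $z=x$, $\zeta=y$. Since $x-y>0$ is real, it reads $\re\big(f(x)-f(y)\big)\ge \delta\,\abs{f(x)-f(y)}$. In particular $u:=\re f|_\R$ is nondecreasing, and since $\abs{f(x)-f(y)}\ge\abs{\im f(x)-\im f(y)}$ we get $\abs{\im f(x)-\im f(y)}\le \delta^{-1}\big(u(x)-u(y)\big)$. Because a nonconstant delta-monotone map is reduced quasiconformal~\cite{IKO2} and cannot be of the form~\eqref{triv} (that would force $\re\frac{f(z)-f(\zeta)}{z-\zeta}\equiv 0$, hence $f$ constant), it maps $\R$ onto a graph over all of $\R$ by~\cite[Proposition 1.5]{IKO2}; thus $u\colon\R\to\R$ is a bijection. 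Setting $g:=\im f\circ u^{-1}$ then produces a function $g\colon\R\to\R$ with Lipschitz constant $\delta^{-1}$ whose graph is exactly $f(\R)$.

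For the converse, let $\Gamma$ be the graph of a Lipschitz function $g$ with $\abs{g(s)-g(t)}\le L\abs{s-t}$, and define $f(x+iy)=x+i\big(g(x)+cy\big)$ with a constant $c>0$ chosen large enough in terms of $L$. This $f$ is a homeomorphism of $\C$ onto $\C$ (its inverse is $a+ib\mapsto a+i\,(b-g(a))/c$), it is globally Lipschitz with some constant $C$, and $f(\R)=\Gamma$. To check delta-monotonicity, write $\Delta x=\re(z-\zeta)$, $\Delta y=\im(z-\zeta)$, and $\Delta g=g(\re z)-g(\re\zeta)$; a direct computation gives
\[
\re\big((f(z)-f(\zeta))\,\overline{z-\zeta}\big)=(\Delta x)^2+c\,(\Delta y)^2+\Delta g\,\Delta y .
\]
Using $\abs{\Delta g}\le L\abs{\Delta x}$ and Young's inequality to absorb the cross term (this is where $c$ large is needed) bounds the right-hand side below by $m\abs{z-\zeta}^2$ for some $m=m(L,c)>0$. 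Combined with $\abs{f(z)-f(\zeta)}\le C\abs{z-\zeta}$, this yields $\re\frac{f(z)-f(\zeta)}{z-\zeta}\ge (m/C)\,\abs{f(z)-f(\zeta)}/\abs{z-\zeta}$, so $f$ is delta-monotone and nonconstant.

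I expect the only genuinely nontrivial point to be the surjectivity of $u$ onto all of $\R$ in the forward direction: the defining inequality by itself yields monotonicity but not that the projection exhausts $\R$. This is supplied by the reduced quasiconformal structure theory of~\cite{IKO2}, which guarantees that horizontal lines map onto full graphs over $\R$. Everything else—the Lipschitz estimate for $g$ and the verification of delta-monotonicity for the shear map—is elementary.
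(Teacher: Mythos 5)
Your proof is correct, and it differs from the paper's in two instructive ways. For the forward direction the paper simply declares the claim obvious; you spell it out, and in particular you isolate the one point that is not a formal consequence of restricting the two-point inequality to $\R$ --- namely that the projection of $f(\R)$ exhausts all of $\R$ --- and you settle it by invoking \cite[Proposition 1.5]{IKO2} together with the (correct) observation that a nonconstant delta-monotone map cannot be of the degenerate form~\eqref{triv}; this is exactly the structure theory the paper itself quotes in its introduction, so the citation is legitimate. For the converse, both you and the paper use the same vertical shear: the paper takes $f(z)=\re z+iL^2\im z+ig(\re z)$, which is your map with $c=L^2$, but the verifications diverge. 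The paper computes $f_z$ and $f_{\bar z}$, checks the pointwise differential inequalities $\abs{\im f_z}\le \re f_z$ and $\abs{f_{\bar z}}\le k\re f_z$ with $k<1$, and then passes from these infinitesimal bounds to the global two-point delta-monotonicity inequality by quoting \cite[Lemma 12]{Ko}. You instead verify delta-monotonicity directly from the definition, via the identity $\re\big((f(z)-f(\zeta))\,\overline{z-\zeta}\big)=(\Delta x)^2+c(\Delta y)^2+\Delta g\,\Delta y$, Young's inequality to absorb the cross term, and the global Lipschitz bound on $f$. Your route is entirely self-contained and elementary, avoiding the external lemma; the paper's route is shorter on the page and ties the example to the differential characterization of delta-monotone maps. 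A small bonus of your formulation: leaving $c$ as a free large parameter avoids the degeneracy of the choice $c=L^2$ at $L=0$ (where the paper's map collapses vertical lines and its constant $k$ reaches $1$), an edge case that in either treatment one would dismiss by assuming $L>0$.
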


\begin{remark}
The concept of a quasisymmetric graph also makes sense for $k$-hypersurfaces in $\R^n$, although it reduces to Lipschitz graphs when $2k>n$.
It would be interesting to investigate, e.g., $2$-dimensional quasisymmetric graphs in $\R^4$, but we do not pursue this direction here.
\end{remark}

\subsection*{Acknowledgments} We thank Vladimir Dubinin, Pekka Tukia and Jussi V\"ais\"al\"a  for their helpful comments.

\section{Preliminaries}

By an \emph{embedding} we understand a map that is a homeomorphism onto its image.
An embedding  $\Gamma\colon \R\to\C$ satisfies the Ahlfors condition if there exists a constant $K$ such that
\begin{equation}\label{ahl}
\diam \Gamma([a,b])\le K \abs{\Gamma(a)-\Gamma(b)} \qquad \text{whenever $a<b$.}
\end{equation}
By a classical theorem of Ahlfors~\cite{Ah63}, the condition~\eqref{ahl} characterizes quasilines, i.e., images of lines under quasiconformal maps. Tukia~\cite{Tu2} proved that every quasisymmetric embedding $\R\to\C$ extends to a quasiconformal map $\C\to\C$.
It immediately follows that every quasisymmetric graph is a quasiline. However, a quasiline may be a graph without being a quasisymmetric graph. Such examples are easy to find, e.g., the graphs $y=\sqrt{x^+}$ and $y=e^x$.

The foundational results on $s$-quasisymmetric maps were obtained by Tukia and V\"ais\"al\"a in 1980s. We will use three of them. For simplicity, the theorems are stated here in the planar case.

\begin{theorem}\label{tv54}~\cite[Theorem 5.4]{TV}
There is a number $s_0>0$ such that for $0\le s\le s_0$ any $s$-quasisymmetric embedding of $\R$ into $\C$ extends to
a $s_1$-quasisymmetric mapping $\C\to\C$. Here $s_1=s_1(s)\to 0$ as $s\to 0$.
\end{theorem}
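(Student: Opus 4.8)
The plan is to build the extension explicitly by a Whitney decomposition and to track the distortion quantitatively, exploiting the fact that an $s$-quasisymmetric embedding with small $s$ is, on every scale, close to a similarity. First I would reduce to extending $f$ separately into the upper half-plane $\HH$ and into its reflection, the two pieces agreeing on $\R$ where both equal $f$, so that they glue to a continuous map of $\C$. The entire argument then rests on a single \emph{local affine approximation} estimate, proved directly from the inequality $\eta(t)\le t+s$.

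For each interval $I=[a,b]\subset\R$ let $A_I$ be the orientation-preserving similarity of $\C$ with $A_I(a)=f(a)$ and $A_I(b)=f(b)$. The first substantive step is to show two estimates, both vanishing as $s\to0$: (a) $\abs{f(x)-A_I(x)}\le C s\,\abs{f(a)-f(b)}$ for $x\in I$; and (b) for adjacent intervals $I,I'$ of comparable length, the multipliers of $A_I$ and $A_{I'}$ agree up to a factor $1+O(s)$ in modulus and $O(s)$ in argument, so that $\sup_{z\in Q}\abs{A_I(z)-A_{I'}(z)}\le C s\,\ell$ on any square $Q$ of side $\ell$ abutting $I\cup I'$. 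Both follow by feeding triples of points into \eqref{defqs} and comparing the resulting ratios with those of the exact similarity; the key is to extract a bound genuinely \emph{linear} in $s$ for small $s$.

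With these in hand I would decompose $\HH$ into Whitney squares $\{Q\}$, each with $\dist(Q,\R)$ comparable to its side length $\ell(Q)$, assign to $Q$ its shadow interval $I_Q\subset\R$ of length $\sim\ell(Q)$, and set $F=\sum_Q\phi_Q\,A_{I_Q}$ for a smooth partition of unity $\{\phi_Q\}$ of bounded overlap. Continuity up to $\R$, where $F=f$, is immediate from (a). Differentiating, $DF$ equals the conformal matrix $DA_{I_Q}$ plus error terms $\sum(\nabla\phi_Q)\otimes(A_{I_Q}-A_{I_{Q'}})$; since $\abs{\nabla\phi_Q}\lesssim\ell(Q)^{-1}$ while neighbouring models differ by $Cs\,\ell(Q)$ on $Q$ by (b), these errors are $O(s)$ relative to the conformal part. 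Hence $DF$ is within $O(s)$ of a conformal matrix uniformly, $F$ is $(1+O(s))$-quasiconformal, and one passes to $s_1$-quasisymmetry with $s_1=O(s)\to0$ through the standard equivalence between small quasiconformal and small quasisymmetric distortion.

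The hard part is not the existence of an extension, which is routine, but the uniform bookkeeping that forces $s_1(s)\to0$. Two points are delicate. First, estimate (a) must be linear in $s$, not merely bounded, because only linear decay survives differentiation of the partition of unity. Second, one must verify the two-sided bound $\eta(t)\le t+s$ for $F$ on \emph{all} scales, in particular at the transition scale where a pair of points straddles $\R$ and neither the upper nor the lower local model by itself governs their ratio. A clean way to organize this is to observe that as $s\to0$ the boundary map degenerates to an isometric embedding of $\R$ onto a line, whose extension is an honest similarity of $\C$; the construction above is a perturbation of that rigid picture, and every estimate above is simply a measurement of the size of the perturbation.
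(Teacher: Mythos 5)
The first thing to note is that the paper does not prove this statement at all: it is imported verbatim from Tukia and V\"ais\"al\"a \cite{TV}, so your proposal is competing with that paper's argument, not with anything proved here. Your Whitney-decomposition scheme is a legitimate and well-known strategy for extension theorems of this kind, and the patching/differentiation bookkeeping in your third paragraph is essentially sound as far as it goes. But the proposal has two genuine gaps. The first is estimate (a): the claim that an $s$-quasisymmetric embedding deviates from the two-point interpolating similarity $A_I$ by at most $Cs\,\abs{f(a)-f(b)}$ (or even by $\varepsilon(s)\abs{f(a)-f(b)}$ with $\varepsilon(s)\to 0$) \emph{is} the theorem. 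Condition \eqref{defqs} with $\eta(t)\le t+s$ controls ratios of distances one triple at a time; converting that into a uniform sup-norm bound against a single similarity on a whole interval requires an iteration over scales with control of the accumulated error, and this is precisely the technical core of the Tukia--V\"ais\"al\"a paper. Dispatching it with ``feeding triples of points into \eqref{defqs} and comparing the resulting ratios'' assumes what was to be shown. (Incidentally, the linearity in $s$ you insist on is not needed: the theorem only claims $s_1(s)\to 0$, so any modulus $\varepsilon(s)\to 0$ survives the differentiation of the partition of unity and yields dilatation $O(\varepsilon(s))$.)

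The second gap is injectivity. The inference ``$DF$ is within $O(s)$ of a conformal matrix, hence $F$ is $(1+O(s))$-quasiconformal'' is a non sequitur: quasiconformality, both in Definition \ref{defqc} and as needed to invoke the quasiconformal-to-quasisymmetric equivalence (Theorem \ref{tv26}), requires $F$ to be a homeomorphism, and a pointwise bound on the Beltrami coefficient gives no injectivity whatsoever --- the map $z\mapsto z^2$ has dilatation $0$ off the origin and is two-to-one. To close this you would need to show that $F$ is a proper local homeomorphism of $\C$: local injectivity in the open half-planes from $\det DF>0$, local injectivity across $\R$ from the fact that the upper and lower extensions land on opposite sides of the curve $f(\R)$, properness from growth bounds on the similarity multipliers at large scales, and then a degree or monodromy argument to conclude that $F$ is a global homeomorphism onto $\C$; after that, removability of the line $\R$ gives quasiconformality on all of $\C$ rather than just off $\R$. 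None of this appears in your sketch, and your closing paragraph replaces it with a perturbation heuristic (``the construction is a perturbation of the rigid picture''), which cannot by itself control a global topological property such as injectivity.
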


\begin{theorem}\label{tv26}~\cite[Theorem 2.6]{TV}
Any $s$-quasisymmetric homeomorphism $f\colon \C\to\C$ is $k$-quasiconformal with $k=k(s)\to 0$ as $s\to 0$.
Conversely, any $k$-quasiconformal homeomorphism $f\colon \C\to\C$ is $s$-quasisymmetric with $s=s(k)\to 0$ as $k\to 0$.
\end{theorem}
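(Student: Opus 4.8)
\emph{Proof proposal.} This is a quantitative equivalence, so I would handle the two implications separately, in each case passing through the \emph{metric} (linear dilatation) description of quasiconformality, which is the natural bridge between the analytic inequality~\eqref{qcmap} and the three-point condition~\eqref{defqs}.

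For the first implication I would start from the linear dilatation
\[
H_f(z) = \limsup_{r\to 0}\frac{L_f(z,r)}{l_f(z,r)}, \qquad L_f(z,r)=\max_{\abs{w-z}=r}\abs{f(w)-f(z)},\quad l_f(z,r)=\min_{\abs{w-z}=r}\abs{f(w)-f(z)}.
\]
If $f$ is $s$-quasisymmetric (with $s\le 1$, which is the only relevant range), then choosing $a,b$ on the circle $\abs{w-z}=r$ that realize the max and the min gives a triple with ratio $t=\abs{z-a}/\abs{z-b}=1$, whence $L_f(z,r)/l_f(z,r)\le\eta(1)\le 1+s$ for every $r$, so that $H_f(z)\le 1+s$ for all $z$. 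The substantive step is then the classical metric-to-analytic theorem: a homeomorphism of $\C$ with everywhere bounded linear dilatation lies in $W^{1,2}_{\rm loc}$ and is quasiconformal. Once this is established, at a.e.\ point of differentiability the pointwise linear dilatation coincides with the analytic dilatation $(\abs{f_z}+\abs{f_{\bar z}})/(\abs{f_z}-\abs{f_{\bar z}})\le 1+s$; solving this inequality yields $\abs{f_{\bar z}}\le\frac{s}{2+s}\abs{f_z}$ a.e., so that $f$ is $k$-quasiconformal with $k\le s/(2+s)\to 0$ as $s\to 0$.

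For the converse I would run the modulus method. The conformal modulus of a ring domain is quasi-invariant: a $k$-quasiconformal map, with $K=(1+k)/(1-k)$, distorts $\Mod$ by a factor at most $K$. Given distinct $a,b,x$, I would place the pair $\{a,x\}$ and the pair $\{b,\infty\}$ inside a Teichm\"uller ring whose modulus is governed by $t=\abs{x-a}/\abs{x-b}$, apply the $K$-quasi-invariance, and read off $\abs{f(x)-f(a)}/\abs{f(x)-f(b)}\le\eta_K(t)$, where $\eta_K$ is the usual distortion function built from the special functions $\tau$ and $\lambda(K)$. The crucial quantitative point is that $\eta_K\to\id$ as $K\to 1$, uniformly for $t$ in compact subsets of $[0,\infty)$ (indeed $\eta_1(t)=t$ exactly, as one sees from an affine conformal map); spelling this out on the range $0\le t\le 1/s$ and tracking the rate gives $\eta_K(t)\le t+s$ with $s=s(k)\to 0$ as $k\to 0$.

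The main obstacle in the first direction is purely the metric-implies-analytic theorem, which carries the genuine analytic content (absolute continuity on lines and the $W^{1,2}$ bound). In the second direction the \emph{qualitative} quasisymmetry of $K$-quasiconformal maps is routine; the difficulty is entirely \emph{quantitative}, namely extracting the sharp small-distortion asymptotics $\eta_K\to\id$ from the behaviour of the modulus of the Teichm\"uller ring as $K\to 1$.
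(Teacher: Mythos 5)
The paper does not prove this statement at all: it is one of three results imported verbatim from Tukia and V\"ais\"al\"a, cited as \cite[Theorem 2.6]{TV} and used as a black box in the proof of Theorem~\ref{smallk}. So there is no in-paper argument to compare yours against; what you have written is a proof of the cited result itself, and as a sketch it is essentially correct, following the standard lines of planar quasiconformal theory. In the first direction, applying the quasisymmetry inequality with $x$ the center of a circle and $a,b$ points realizing $L_f(z,r)$ and $l_f(z,r)$ (so $t=1\le 1/s$ when $s\le 1$) does give $H_f(z)\le\eta(1)\le 1+s$ at every point; the genuinely nontrivial ingredient is, as you acknowledge, the classical metric-to-analytic theorem (Gehring; Lehto--Virtanen), after which a.e.\ differentiability with $J_f>0$ a.e.\ turns the bound $H_f\le 1+s$ into $\abs{f_{\bar z}}\le \frac{s}{2+s}\abs{f_z}$ a.e., so $k(s)=s/(2+s)\to 0$. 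In the converse direction, the Teichm\"uller-ring argument is the standard one, and the point you correctly isolate as crucial --- that $\eta_K\to\id$ uniformly on compact subsets of $[0,\infty)$ as $K\to 1$ --- is available in explicit special-function form, e.g.\ $\eta_K(t)\le\lambda(K)\max(t^K,t^{1/K})$ with $\lambda(K)\to 1$ as $K\to 1$. Your handling of the self-referential $s$-quasisymmetry condition (error $s$ coupled to the range $[0,1/s]$) is also sound: for fixed $\varepsilon$ the range $[0,1/\varepsilon]$ is compact, so one may take $s(k)$ to be the infimum of admissible $\varepsilon$, and it tends to $0$ as $k\to 0$.

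One caveat deserves a sentence in any careful write-up. With the paper's Definition~\ref{defqc}, quasiconformal maps are necessarily sense-preserving, whereas $s$-quasisymmetric homeomorphisms of $\C$ need not be: $z\mapsto\bar z$ is an isometry, hence $s$-quasisymmetric for every $s>0$, yet it has $f_z=0$ and so is not $k$-quasiconformal for any $k<1$. Your identification of the linear dilatation with $(\abs{f_z}+\abs{f_{\bar z}})/(\abs{f_z}-\abs{f_{\bar z}})$ silently assumes orientation is preserved, and indeed the first half of the theorem is false as literally stated in the paper's conventions without that assumption (in \cite{TV} quasiconformality is defined via moduli and is orientation-neutral, so no issue arises there). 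This is a defect of transplanting the statement into Definition~\ref{defqc} rather than of your argument, but you should either assume $f$ sense-preserving or conclude that $f$ or $\bar f$ satisfies~\eqref{qcmap}.
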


\begin{theorem}\label{va39}~\cite[Theorem 3.9]{Va}
Let $0<\varkappa\le \frac{1}{25}$, and let $f\colon \R\to\C$ be a map such that for any $a<b$ there is
an affine map $h \colon [a,b]\to \C$ with
\begin{equation}\label{va39a}
\sup_{[a,b]}\, \abs{h-f} \le \varkappa \abs{h(a)-h(b)}.
\end{equation}
Then $f$ is $s$-quasisymmetric, where $s=s(\varkappa)\to 0$ as $\varkappa\to 0$.
\end{theorem}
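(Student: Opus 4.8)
The plan is to verify the $s$-quasisymmetry inequality directly for every triple of distinct points and then package the resulting bounds into a control function $\eta$. Fix distinct $a,b,x\in\R$ and set $d_a=\abs{x-a}$, $d_b=\abs{x-b}$, $t=d_a/d_b$. The guiding observation is that an affine map is an \emph{exact} quasisymmetry: writing $h(u)=\sigma u+\beta$ one has $\abs{h(u)-h(w)}=\abs{\sigma}\,\abs{u-w}$, so the ratio it produces equals $t$ on the nose. Thus the whole problem is to show that the uniform affine approximants supplied by the hypothesis do not corrupt this identity by more than $s$. For an interval $I=[p,q]$ let $h_I$ be the approximant, $L_I=\abs{h_I(p)-h_I(q)}$ its scale and $\sigma_I$ its (complex) slope, so $\abs{\sigma_I}=L_I/(q-p)$ and, for all $u,w\in I$,
\[
\bigl|\,\abs{f(u)-f(w)}-\abs{\sigma_I}\,\abs{u-w}\,\bigr|\le 2\varkappa L_I .
\]
A preliminary remark is that $\varkappa<\tfrac12$ forces every $h_I$ to be nondegenerate (otherwise $f$ would be constant on $I$), so $L_I>0$ and, en route, $f$ is injective.

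Consider first the regime $t\le 1$, so $b$ is the farther point, and let $J=[p,q]$ be the smallest interval containing $a,b,x$. Because the three points span an interval of length between $d_b$ and $d_a+d_b\le 2d_b$, one has $d_b\le q-p\le 2d_b$. Applying the displayed estimate on the \emph{single} interval $J$ to the pairs $(x,a)$ and $(x,b)$ and dividing,
\[
\frac{\abs{f(x)-f(a)}}{\abs{f(x)-f(b)}}\le\frac{d_a+2\varkappa(q-p)}{d_b-2\varkappa(q-p)}\le\frac{t+4\varkappa}{1-4\varkappa},
\]
so the ratio exceeds $t$ by at most $8\varkappa/(1-4\varkappa)=O(\varkappa)$. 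Here the common slope $\sigma_J$ cancels, and the error is harmless because we divide by the \emph{large} distance $d_b\sim q-p$.

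The regime $1\le t\le 1/s$ is the heart of the matter and the main obstacle. Now we must bound (large image distance)/(small image distance), and the single-interval estimate fails: the error on $J$ is of size $\varkappa L_J\sim\varkappa\,\abs{\sigma_J}(q-p)\sim\varkappa t\,\abs{\sigma_J}d_b$, which swamps $\abs{f(x)-f(b)}\sim\abs{\sigma_J}d_b$ once $t\sim 1/\varkappa$. The remedy is to resolve each distance at its own scale: estimate $\abs{f(x)-f(b)}$ on the short interval $I_0$ with endpoints $x,b$ (length $d_b$) and $\abs{f(x)-f(a)}$ on $J$ (length $\le 2d_a$), obtaining $\abs{f(x)-f(b)}\ge(1-2\varkappa)\abs{\sigma_{I_0}}d_b$ and $\abs{f(x)-f(a)}\le(1+4\varkappa)\abs{\sigma_J}d_a$. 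The price is that one must compare the two slopes, whose intervals differ in length by the large factor $\sim t$. I do this through a chain of $N\sim\log_2 t$ nested intervals $I_0\subset I_1\subset\cdots\subset I_N=J$ with consecutive length ratios $\le 2$. Comparing $h_{I_k}$ and $h_{I_{k+1}}$ on $I_k$ (both lie within $\varkappa(L_{I_k}+L_{I_{k+1}})$ of $f$, hence of each other) bounds the slope increment by $\abs{\sigma_{I_{k+1}}-\sigma_{I_k}}\le C\varkappa\max(\abs{\sigma_{I_k}},\abs{\sigma_{I_{k+1}}})$. Telescoping these \emph{complex} increments, rather than multiplying magnitude ratios, gives
\[
\abs{\sigma_J-\sigma_{I_0}}\le C\varkappa N\max_k\abs{\sigma_{I_k}}\le C\varkappa(\log_2 t)\,\abs{\sigma_J},
\]
so $\abs{\sigma_J}/\abs{\sigma_{I_0}}\le 1+\rho$ with $\rho\lesssim\varkappa\log t$ (a short bootstrap using $\rho<1$ keeps all slopes in the chain comparable). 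Substituting yields $\abs{f(x)-f(a)}/\abs{f(x)-f(b)}\le t(1+\rho')$ with $\rho'\lesssim\varkappa\log t+\varkappa$.

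It remains to choose $s$ and assemble $\eta$. On $t\le 1/s$ we have $\rho'\lesssim\varkappa\log(1/s)$, so the ratio exceeds $t$ by at most $t\rho'\le\rho'/s$; demanding $\rho'/s\le s$ leads to the balance $s=s(\varkappa)\asymp\sqrt{\varkappa\log(1/\varkappa)}$, which tends to $0$ as $\varkappa\to0$ and, one checks, also dominates the $O(\varkappa)$ error from $t\le1$. With this $s$, every triple with $t\le 1/s$ satisfies $\abs{f(x)-f(a)}\le(t+s)\abs{f(x)-f(b)}$; setting $\eta(t)=t+s$ on $[0,1/s]$ and extending it to an increasing homeomorphism of $[0,\infty)$ that still dominates the finite, power-type bound $t^{1+O(\varkappa)}$ valid for $t>1/s$ shows $f$ is $\eta$-quasisymmetric with $\eta(t)\le t+s$ on $[0,1/s]$, i.e. $s$-quasisymmetric. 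The principal difficulty is exactly the multiscale slope comparison above: controlling the accumulated error of the affine approximants across $\sim\log t$ scales is what forces the logarithmic loss and hence the rate $s\asymp\sqrt{\varkappa\log(1/\varkappa)}$.
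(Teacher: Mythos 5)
The first thing to say is that the paper contains no proof of this statement: Theorem~\ref{va39} is imported verbatim from V\"ais\"al\"a~\cite[Theorem 3.9]{Va} and used as a black box, so your argument can only be measured against the cited source, not against anything in the text. On its merits, your scheme is the right one and the quantitative bookkeeping checks out: the two-sided estimate $\bigl|\,\abs{f(u)-f(w)}-\abs{\sigma_I}\abs{u-w}\,\bigr|\le 2\varkappa L_I$ is correct; the single-interval computation in the regime $t\le 1$ gives $t+O(\varkappa)$ (the denominator $1-4\varkappa$ is safely positive for $\varkappa\le\nicefrac{1}{25}$); the chain of $N\sim\log_2 t$ doubling intervals with slope increments bounded by $C\varkappa\max(\abs{\sigma_{I_k}},\abs{\sigma_{I_{k+1}}})$ is valid (two affine maps within $\epsilon$ of each other on an interval of length $\ell$ have slopes within $2\epsilon/\ell$), the bootstrap keeping the slopes comparable works because $\varkappa N\lesssim\varkappa\log(1/s)\lesssim\varkappa\log(1/\varkappa)\to 0$, and the balance $s\asymp\sqrt{\varkappa\log(1/\varkappa)}$ does make the excess over $t$ at most $s$ uniformly for $t\le 1/s$. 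The logarithmic loss across scales is indeed the crux, and the resulting rate comfortably meets the theorem's requirement $s(\varkappa)\to 0$.

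There are two loose ends, one of which is a genuine (though repairable) gap. The paper's definition of quasisymmetry requires $\eta$ to be a homeomorphism of $[0,\infty)$, hence $\eta(0)=0$; your final control function equals $t+s$ near $t=0$, which maps $[0,\infty)$ onto $[s,\infty)$, and nothing in your write-up shows that the distance ratio tends to $0$ as $t\to 0$ --- your only bound there is $(t+4\varkappa)/(1-4\varkappa)$, a positive constant at $t=0$. This is fixable in two ways: either run your chain argument with the roles of $a$ and $b$ swapped to extract the matching lower bounds (ratio $\ge t(1-\rho')$ for moderate $1/t$, ratio $\ge c\,t^{1+c\varkappa}$ in general), or observe that your bounds already give weak quasisymmetry and invoke the weak-to-strong upgrade for embeddings of connected doubling spaces~\cite[Theorem 10.19]{He} --- the same result this paper uses in proving Theorem~\ref{suffgraph} --- to get a homeomorphism $\eta_1$, and then take $\eta=\min(\eta_1(t),\,t+s)$, suitably extended, which is a homeomorphism satisfying $\eta(t)\le t+s$ on $[0,1/s]$. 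The second loose end is your nondegeneracy remark: a globally constant $f$ satisfies the hypothesis vacuously (take $h$ constant; both sides of~\eqref{va39a} vanish), so ``otherwise $f$ would be constant on $I$'' is not by itself a contradiction. The correct statement is a propagation argument: if $L_I=0$ then $f\equiv c$ on $I$; for any $J\supset I$ with $\abs{J}<\abs{I}/(2\varkappa)$ the oscillation bound forces $\sigma_J=0$, hence $f$ is constant on $J$, and iterating, on all of $\R$ --- a degenerate case that must simply be excluded, being an artifact of the statement as quoted. With these two patches your proof is complete.
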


\begin{definition}\label{doubling}
A positive Radon measure $\mu$ on $\R$ is doubling if there exists $\delta>0$ such that
\begin{equation}\label{dbl0}
\mu(I)\le (1+\delta)\mu (J)  
\end{equation}
for any adjacent intervals $I, J$ of equal length.
\end{definition}

\begin{definition} A continuous function $g\colon \R\to\R$ belongs to the Zygmund class $\Lambda_*$
if there exists a constant $M>0$ such that
\begin{equation}\label{zydef}
\abs{g(x+h)-2g(x)+g(x-h)}\le 2Mh \qquad \text{for all $x\in \R$, $h>0$}
\end{equation}
The smallest such $M$ is the Zygmund seminorm of $g$.
\end{definition}

It is often said that~\eqref{zydef} is an additive form of~\eqref{dbl0}.
One can interpret~\eqref{zydef} by saying that the nonlinearity of $g$ on any interval is controlled by the length of the interval. The relevance of the class $\Lambda_*$ to geometric function theory is evident by now~\cite{GS,RC,CCH}.
But our subject required a wider class of functions, in which the length is replaced by a general nonatomic Radon measure on~$\R$. A measure is nonatomic if it gives zero mass to every singleton. All our measures are positive.

\begin{definition}\label{gzyg}
Let $\mu$ be a nonatomic Radon measure on $\R$. A continuous function $g\colon \R\to\R$ belongs to the generalized Zygmund class $\Lambda_\mu$
if there exists a constant $M>0$ such that
\begin{equation}\label{gzydef}
\abs{g(x+h)-2g(x)+g(x-h)}\le M\mu([x-h,x+h]) 
\end{equation}
for all $x\in \R$, $h\ge 0$.
The smallest such $M$ is the seminorm of $g$ in $\Lambda_\mu$.
\end{definition}

We should make precise the  remark about the controlled nonlinearity of $g$. Given distinct points $a,b\in \R$, let
\begin{equation}\label{gab}
g_{ab}(x)=\frac{b-x}{b-a}\,g(a)+\frac{x-a}{b-a}\,g(b)
\end{equation}
denote the affine function that agrees with $g$ at $a$ and $b$. If $g$ satisfies~\eqref{gzydef}, then
\begin{equation}\label{gzyd2}
\sup_{[a,b]}\,\abs{g-g_{ab}}\le M\mu([a,b]) \qquad \text{whenever $a<b$.}
\end{equation}
Indeed, we lose no generality in  assuming that $g(a)=g(b)=0$ and $\abs{g}$ attains its maximum on $[a,b]$ at a point
$\xi\le \frac{a+b}{2}$. Applying~\eqref{gzydef} with $x=\xi$ and $h=\xi-a$, we find
\[
\abs{g(2\xi-a)-2g(\xi)}\le M\mu([a,b]),\quad \text{ hence }\quad \abs{g(\xi)}\le M\mu([a,b]).
\]
Conversely,~\eqref{gzyd2} yields~\eqref{gzydef} with $2M$ in place of $M$.

\begin{definition}\label{vardef}
Let $\Phi\colon [0,\infty)\to[0,\infty)$ be a convex increasing function. A function $v\colon [a,b]\to\R$ has finite $\Phi$-variation if
\begin{equation}\label{finva}
\sup \sum_{j=1}^N \Phi(\abs{v(x_j)-v(x_{j-1})})<\infty,
\end{equation}
where the supremum is taken over all partitions $a=x_0<\dots<x_N=b$ and over all $N\ge 1$.
If $v$ is defined on $\R$, we say that it has locally finite $\Phi$-variation if~\eqref{finva} holds for every bounded interval.
\end{definition}

In the sequel, the constants $C$ and $c$ in estimates may be different from one line to another.

\section{Proof of Propositions~\ref{conv} and~\ref{prodelta}}

Two of the results stated in the introduction admit simple proofs.

\begin{proof}[Proof of Proposition~\ref{conv}]
To a reduced quasiconformal map $f$ we associate the one-parameter family $f_{\lambda}(z)=f(z)+i\lambda z$, $\lambda\in\R$. Unless
$f$ is of the form~\eqref{triv}, each $f_{\lambda}$ is also reduced quasiconformal, as it is nonconstant and satisfies~\eqref{rqcmap} with the same constant as $f$.
Therefore, $f_{\lambda}$ is $\eta$-quasisymmetric with $\eta$ independent of $\lambda$.
In particular, for any triple of distinct points $a,b,x\in \R$ we have
\begin{equation}\label{qs1}
\abs{f_{\lambda}(x)-f_{\lambda}(a)}\le \eta(\abs{\tau})\abs{f_{\lambda}(x)-f_{\lambda}(b)},\qquad \tau=\frac{{x-a}}{x-b}.
\end{equation}
Setting $\lambda = -\im \frac{f(x)-f(b)}{x-b}$ results in
\begin{equation}\label{qs2}
\abs{f(x)-f(a)-i\tau \im (f(x)-f(b))}\le \eta(\abs{\tau})\,\abs{\re (f(x)- f(b))}.
\end{equation}
There are two ways to use~\eqref{qs2}. First, we can take the real part and obtain
\begin{equation}\label{qs3}
\abs{\re (f(x)-f(a))}\le \eta(\abs{\tau})\, \Abs{\re (f(x)- f(b))}
\end{equation}
which simply says that $\re f$ is a quasisymmetric map from $\R$ onto $\R$.
Combining~\eqref{qs3} with the quasisymmetry of $f$, we conclude that the projection $w\mapsto \re w$ is a quasisymmetric map from $\Gamma$ to $\R$.

 Let $\mu$ denote the distributional derivative of $\re f(x)$ with respect to $x$. Since $\re f$ is quasisymmetric, $\mu$ is a doubling
 measure on $\R$~\cite[Remark~13.20b]{He}.
Taking the imaginary part in~\eqref{qs2} yields
\begin{equation}\label{qs4}
\abs{\im (f(x)-f(a))-\tau\im (f(x)-f(b))}\le \eta(\abs{\tau})\, \Abs{\re (f(x)- f(b))}.
\end{equation}
Choosing $x=\frac{a+b}{2}$, we conclude that  $\im f\in\Lambda_\mu$.
\end{proof}

\begin{remark} Every quasisymmetric graph $y=g(x)$ admits a natural quasisymmetric parametrization by $\R$, namely
 $f(x)=x+i g(x)$. In general, this function $f$ does not satisfy~\eqref{addition} and therefore cannot be extended
to a reduced quasiconformal map of the plane. For a concrete example, take  the graph $y=x^{\nicefrac{1}{3}}$.
\end{remark}

\begin{proof}[Proof of Proposition~\ref{prodelta}]

It is obvious that $f(\R)$ is a Lipschitz graph for every delta-monotone map $f \colon \C \to \C$.  Conversely, for any $L$-Lipschitz real function $g$ the mapping
\[f(z):=\re z+iL^2\im z + ig(\re z)\]
satisfies
\begin{equation}\label{lipex1}
\re f_z= \frac{L^2+1}{2},\qquad \abs{\im f_z} \le \frac{L}{2} \le \re f_z,
\end{equation}
and
\begin{equation}\label{lipex2}
\abs{f_{\bar z}} \le \sqrt{\frac{(L^2-1)^2}{4}+\frac{L^2}{4}} \le  k \re f_z
\end{equation}
with $k=k(L)<1$. The combination of~\eqref{lipex1} and~\eqref{lipex2} implies that $f$ is delta-monotone, see~\cite[Lemma 12]{Ko}.
\end{proof}

\section{Proof of Theorem~\ref{smallk}}

Let $\HH=\{z\colon \im z>0\}$ denote the upper half-plane.
By Theorems~\ref{tv54} and~\ref{tv26} the curve $\Gamma$ is a $k$-quasiline where $k$ is small if $s$ is.

The curve $\Gamma$ divides the plane into two domains; let $\Omega$ denote the upper one.
Let $f\colon \HH\to\Omega$ be a conformal mapping such that
$f(\infty)=\infty$ in the sense of boundary correspondence. Since $\Gamma$ is a $k$-quasiline, $f$ extends to $\Gamma$ by continuity. It then extends to the entire plane by quasiconformal reflection, and the extended mapping is
$\frac{2k}{1+k^2}$-quasiconformal~\cite{Ah63}. By Theorem~\ref{tv26} the correspondence $x\mapsto \re f(x)$ is $s_1$-quasisymmetric where $s_1$ is small if $k$ is.

We claim that there exists $\tilde k\in [0,1)$ such that $\tilde k\to 0 $ as $k\to 0$ and
\begin{equation}\label{suf2}
2\im z \, \abs{f''(z)}\le \tilde k\re f'(z)\qquad  \text{for all } z\in\HH.
\end{equation}
Assume~\eqref{suf2} for now and complete the proof of the theorem.

The Koebe $\nicefrac{1}{4}$-theorem~\cite[(I.6.7)]{Le} yields
\begin{equation}\label{bdist}
\im z \, \abs{f'(z)} \le  2 \dist(f(z),\C\setminus f(\HH))  \qquad \text{for all } z\in \HH.
\end{equation}
Hence
\begin{equation}\label{bdist2}
\lim_{z\to\zeta} \im z\, \abs{f'(z)}=0 \qquad \text{for any $\zeta\in \R$.}
\end{equation}

We extend $f$ to $\C$ following the method  that goes back to Ahlfors and Weill~\cite{AW} and was
further developed in~\cite{Ah,AH,Ha}. Namely, we define $F\colon \C\to\C$ by
\begin{equation}\label{ext1}
F(z)= \begin{cases} f(z) \quad &\im z\ge 0; \\
f(\bar z)+(z-\bar z)f'(\bar z) \quad &\im z<0.
\end{cases}
\end{equation}
By virtue of~\eqref{bdist2} the mapping $F$ is continuous in $\C$. For $z\in \C\setminus\overline{\HH}$ we have
\begin{equation}\label{ext2}
F_z = f'(\bar z)\qquad \text{and}\qquad F_{\bar z} = (z-\bar z)f''(\bar z).
\end{equation}
The comparison of~\eqref{suf2} and~\eqref{ext2} shows that $F$ is reduced quasiconformal.
The theorem is proved, modulo~\eqref{suf2}.
\qed

\begin{proof}[Proof of~\eqref{suf2}]
The first step is to observe that $\re f'>0$ in $\HH$. To this end, introduce the function
\[
u_h(z):=\arg (f(z+h)-f(z)) \qquad \text{for a fixed $h>0$.}
\]
Here we choose the branch of $\arg$ so that $\abs{u_h}<\pi/2$ on $\partial \HH$: this is possible because
$f$ extends to a homeomorphism  $f\colon \overline{\HH} \to \overline{\Omega}$ and $\partial \Omega$ is a graph.
The maximum principle implies $\abs{u_h}<\pi/2$ in $\HH$, and letting $h\to 0$ we obtain
the desired conclusion $\re f'>0$.

The harmonic function $u=\re f'$, being positive in $\HH$, admits the Herglotz representation~\cite[Theorem I.3.5]{Ga}
\begin{equation}\label{u1}
u(z)=\beta \im z+ \frac{1}{\pi}\int_{\R}\im \frac{1}{t-z} \, d\mu(t)
\end{equation}
where $\beta \ge 0$ and $\mu$ is a positive measure on $\R$ such that
\begin{equation}\label{u2}
\int_{\R}\frac{1}{1+t^2}\,d\mu(t)<\infty.
\end{equation}

Integration of ~\eqref{u1} yields $\mu([a,b])=\re (f(b)-f(a))$  for any finite interval $[a,b]\subset \R$. Recall that the map $x\mapsto \re f(x)$ is $s_1$-quasisymmetric where $s_1\to 0$ as $k\to 0$. Therefore, the measure $\mu$ satisfies the doubling condition~\eqref{dbl0} where $\delta\to 0$ as $k\to 0$.

To proceed further, we must establish that $\beta=0$ in~\eqref{u1}. To this end, we need the following growth estimate for univalent
functions $F\colon \HH\to\C$:
\begin{equation}\label{grow}
\abs{F(x+iy)}\le \abs{F(i)}+\frac{(y+1)^4}{y^2}\abs{F'(i)} \qquad \text{for }y \ge 1, \quad \abs{x} \le y+1.
\end{equation}
To prove~\eqref{grow}, introduce
\begin{equation}\label{free1}
G(\zeta)=\frac{-i}{2 F'(i)} \left\{F\left(i\frac{1+\zeta}{1-\zeta}\right)-F(i)\right\},\qquad \abs{\zeta}<1,
\end{equation}
and observe that $G(0)=G'(0)-1=0$.
The growth theorem for class $S$~\cite[Theorem~2.6]{Du} asserts that
\begin{equation}\label{free2}
\abs{G(\zeta)} \le \frac{\abs{\zeta}}{(1-\abs{\zeta})^2} =  \frac{\abs{\zeta} (1+\abs{\zeta})^2 }{(1-\abs{\zeta}^2)^2}  \le  \frac{4}{(1-\abs{\zeta}^2)^2}. 
\end{equation}
We set $x+iy= i \frac{1+\zeta}{1-\zeta}$ and observe that $\abs{\zeta}^2 \le \frac{y^2+1}{(y+1)^2}$. Combinng this with~\eqref{free2} and~\eqref{free1} the inequality~\eqref{grow} follows.

We may assume $0\in \partial\Omega$. For $r>0$ let $\Gamma$ be the connected component of the set $\{z\in \C\setminus \Omega \colon \abs{z}=r\}$ that contains the point $-ir$. By virtue of the Ahlfors condition~\eqref{ahl} the length of $\Gamma$ is bounded from below by $cr$, with $c>0$ independent of $r$. Therefore the mapping $z\mapsto z^p$, where $p=\frac{2\pi}{2\pi-c}>1$, is univalent in $\Omega$. This allows us to apply~\eqref{grow} with $F=f^p$ and conclude that
$\abs{f(x+iy)}=O(y^{\nicefrac{2}{p}})$ as $y\to\infty$,  $\abs{x} \le y$. The Cauchy inequality for $f'$ yields
$\abs{f'(iy)}=O(y^{\frac{2}{p} -1})$ as $y\to\infty$. Since the exponent of $y$ is strictly less than $1$, the coefficient $\beta$ in~\eqref{u1} must vanish.

Returning to~\eqref{u1}, we compute
\begin{equation}\label{u3}
f''(z)=2\,\frac{\partial u}{\partial z}(z)=\frac{1}{\pi i}\int_{\R} \frac{1}{(t-z)^2} \, d\mu(t)
\end{equation}
and
\begin{equation}\label{u3b}
\re f'(z)=u(z) = \frac{\im z}{\pi }\int_{\R} \frac{1}{\abs{t-z}^2} \, d\mu(t)
\end{equation}

Thus, the desired inequality~\eqref{suf2} takes the form
\begin{equation}\label{u5}
\Abs{\int_{\R} \frac{1}{(t-z)^2} \, d\mu(t)}\le \frac{\tilde k}{2} \int_{\R}\frac{1}{\abs{t-z}^2}\, d\mu(t)
\end{equation}

The following lemma yields~\eqref{u5}. It is not particularly new; one can find a similar, but less precise, statement in~\cite[p.~157]{DN}.
\end{proof}

\begin{lemma}\label{dbl} For any $\varepsilon>0$ there exists $\delta>0$ such that the following holds.
If $\mu$ satisfies the doubling condition~\eqref{dbl0} then
\begin{equation}\label{dbl1}
\Abs{\int_{\R} \frac{1}{(t-z)^2} \, d\mu(t)}\le \varepsilon \int_{\R}\frac{1}{\abs{t-z}^2}\, d\mu(t)<\infty
\qquad \text{for all $z\in \HH$.}
\end{equation}
\end{lemma}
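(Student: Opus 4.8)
\emph{The plan is} to normalize, reduce to a single point, and then argue by a compactness contradiction that rests on one clean computation: Lebesgue measure is the extremal ``$\delta=0$'' doubling measure and it makes the left-hand side of \eqref{dbl1} vanish identically. First I would reduce to $z=i$. Writing $z=x+iy$ and substituting $t=x+yr$, the push-forward $\mu^{*}(E)=\mu(x+yE)$ is doubling with the \emph{same} constant $\delta$, while both integrals in \eqref{dbl1} acquire a common factor $y^{-2}$ that cancels. After this reduction set
\[
N(\mu)=\int_{\R}\frac{d\mu(t)}{(t-i)^2},\qquad D(\mu)=\int_{\R}\frac{d\mu(t)}{t^2+1},
\]
and record the elementary identities $\re(t-i)^{-2}=(t^2-1)(t^2+1)^{-2}$, $\im(t-i)^{-2}=2t(t^2+1)^{-2}$, and $\abs{t-i}^{-2}=(t^2+1)^{-1}$. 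The guiding observation is that against $dt$ both of these mean-zero weights integrate to zero, so $N(dt)=0<\varepsilon D(dt)=\varepsilon\pi$. Since \eqref{dbl1} is homogeneous of degree one in $\mu$, I may normalize $D(\mu)=1$.

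Next I would dispose of finiteness and record the uniform bounds the compactness step needs. Splitting $[x-2h,x+2h]$ into four abutting pieces, \eqref{dbl0} gives $\mu([x-2h,x+2h])\le(2+\delta)\mu([x-h,x+h])$, hence $\mu([-2^k,2^k])\le(2+\delta)^k\mu([-1,1])$. For $\delta<2$ the growth exponent $\log_2(2+\delta)$ is strictly below $2$, so the dyadic tail sum $\sum_k 2^{-2k}\mu([-2^{k+1},2^{k+1}])$ converges; this is precisely $D(\mu)<\infty$, and the same bound dominates $\abs{N(\mu)}$. The normalization $D(\mu)=1$ also forces $\mu([-1,1])\le 2$ and the local bound $\mu([-R,R])\le R^2+1$, so for $\delta\le\delta_0<2$ the tails $\int_{\abs t>R}(t^2+1)^{-1}d\mu$ are small uniformly.

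The main step is the contradiction. Suppose the conclusion fails: there are $\varepsilon_0>0$ and doubling measures $\mu_n$ with constants $\delta_n\to 0$, normalized by $D(\mu_n)=1$, yet $\abs{N(\mu_n)}\ge\varepsilon_0$. The uniform local and tail bounds above yield, via Helly selection, a subsequence converging vaguely to a measure $\mu_\infty$ with no loss of mass at infinity, so that $D(\mu_n)\to D(\mu_\infty)=1$ and $N(\mu_n)\to N(\mu_\infty)$ with $\abs{N(\mu_\infty)}\ge\varepsilon_0$. Passing \eqref{dbl0} to the limit shows $\mu_\infty$ is doubling with $\delta=0$, i.e.\ it gives equal mass to any two adjacent intervals of equal length; iterating this equality on dyadic subintervals and using that doubling measures are nonatomic (so $x\mapsto\mu_\infty([x,x+h])$ is continuous) forces the mass of an interval to depend only on its length and to be additive, whence $\mu_\infty=c\,dt$. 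But then $N(\mu_\infty)=c\,N(dt)=0$, contradicting $\abs{N(\mu_\infty)}\ge\varepsilon_0$, and the lemma follows.

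\emph{The hard part} is the tightness inside the compactness step: vague convergence by itself does not control the integrals against the slowly decaying weight $(t^2+1)^{-1}$, and one must genuinely exploit that a small doubling constant pushes the growth exponent below $2$ — quadratic growth would be borderline — to prevent mass from escaping to infinity. Everything else, namely the affine reduction, the Lebesgue computation, and the identification of the $\delta=0$ limit, is routine. Should an explicit dependence $\delta=\delta(\varepsilon)$ be desired, one could instead integrate by parts against the distribution function $M(t)=\mu((0,t])$ and estimate directly, but for the stated existence claim the compactness argument suffices.
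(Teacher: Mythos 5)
Your proof is correct, but it takes a genuinely different route from the paper's. The paper argues directly and quantitatively: after the same affine reduction to $z=i$ (but normalizing $\mu([-1,1])=1$ rather than $D(\mu)=1$), it integrates by parts against the distribution function $\mu([-t,t])$ and uses the growth/decay estimate that iterated doubling gives, namely $\abs{\mu([-t,t])-t}=O(\gamma)$ for $t<1$ and $O(\gamma t^{1+\gamma}(1+\log t))$ for $t>1$ with $\gamma\to 0$ as $\delta\to 0$, to show that the denominator equals $\pi+O(\gamma)$ while the real and imaginary parts of the numerator are $O(\gamma)$ and $O(\delta)$ respectively; in other words, it quantifies exactly your guiding observation that the numerator vanishes for Lebesgue measure. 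Your compactness/contradiction argument replaces all of these computations with Helly selection plus the rigidity statement that a ``$\delta=0$ doubling'' measure is a constant multiple of Lebesgue measure, and the tightness estimate you single out as the hard part (growth exponent $\log_2(2+\delta)<2$, hence uniformly small tails against the weight $(t^2+1)^{-1}$) is exactly the right ingredient and is what legitimizes passing $D(\mu_n)\to D(\mu_\infty)$ and $N(\mu_n)\to N(\mu_\infty)$. What you lose is any effective dependence $\delta=\delta(\varepsilon)$, as you acknowledge; what you gain is a softer argument with no integration by parts. One detail you should write out carefully: passing \eqref{dbl0} to the vague limit requires $\mu_n(I)\to\mu_\infty(I)$, which is only guaranteed for intervals whose endpoints carry no $\mu_\infty$-mass, and your parenthetical appeal to ``doubling measures are nonatomic'' is slightly circular as written, since at that stage $\mu_\infty$ is not yet known to be doubling. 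The fix is routine: atoms of $\mu_\infty$ form a countable set, so the equal-mass property holds for intervals with endpoints off that set; then $F(x)=\mu_\infty([0,x])$ is monotone and satisfies $F(x+2h)-2F(x+h)+F(x)=0$ off a countable set, which forces $F$ to be affine and, retroactively, $\mu_\infty=c\,dt$ with no atoms.
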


\begin{proof} We write $\abs{I}$ for the length of an interval $I$.
Repeated application of the doubling property yields the  growth/decay estimate
\begin{equation}\label{dbl2}
(1-\gamma) \min(\tau, \tau^{-1})^\gamma \le
\frac{\mu(I)}{\tau \mu(J)} \le (1+\gamma) \max(\tau, \tau^{-1})^\gamma,\qquad \tau=\frac{\abs{I}}{\abs{J}}
\end{equation}
for any two intervals  $I$ and $J$ with  a common point.
Here $\gamma\in (0,1)$ depends only on $\delta$, and $\gamma\to 0$ as $\delta\to 0$.

Using shift, scaling, and normalization, we reduce~\eqref{dbl1} to the case $z=i$ and $\mu([-1,1])=1$. By virtue of~\eqref{dbl2},
 for all $t>0$ we have
\begin{equation}\label{dbl3}
(1-\gamma)t \min(t, t^{-1})^\gamma \le \mu([-t,t]) \le (1+\gamma)t \max(t, t^{-1})^\gamma .
\end{equation}
For small $\gamma$ the estimates~\eqref{dbl2} yield the following uniform bounds in $t$,
\begin{equation}\label{dbl3a}
\abs{\mu([-t,t])-t} = \begin{cases} O(\gamma)\quad &\text{if $0<t<1$} \\
 O(\gamma t^{1+\gamma}(1+\log t))\quad &\text{if $t>1$}
 \end{cases}
\end{equation}

We proceed to estimate both sides of~\eqref{dbl1} via integration by parts followed by~\eqref{dbl3a}.
\begin{equation}\label{dbl4}
\begin{split}
\int_{\R}\frac{1}{t^2+1}\, d\mu(t) &= \int_0^\infty \frac{2t}{(t^2+1)^2} \mu([-t,t])\,dt \\
&=\pi+ \int_0^\infty \frac{2t}{(t^2+1)^2} (\mu([-t,t])-t)\,dt
\end{split}
\end{equation}
which in view of~\eqref{dbl3a} implies
\begin{equation}\label{dbl5}
\int_{\R}\frac{1}{t^2+1}\, d\mu(t) = \pi +O(\gamma) \qquad \text{as $\gamma\to 0$.}
\end{equation}
Next,
\begin{equation}\label{dbl6}
\begin{split}
\re \int_{\R} \frac{1}{(t-i)^2} \, d\mu(t) &=  \int_{\R} \frac{t^2-1}{(t^2+1)^2} \, d\mu(t) \\
&= \int_0^\infty \frac{2t(t^2-3)}{(t^2+1)^3} \mu([-t,t])\,dt \\
& = \int_0^\infty \frac{2t(t^2-3)}{(t^2+1)^3} (\mu([-t,t])-t)\,dt \\
& = O(\gamma)
\end{split}
\end{equation}
Finally,
\begin{equation}\label{dbl7}
\begin{split}
\im \int_{\R} \frac{1}{(t-i)^2} \, d\mu(t) &=  \int_{\R} \frac{2t}{(t^2+1)^2} \, d\mu(t) \\
&= \int_{0}^\infty \frac{2(3t^2-1)}{(t^2+1)^3}(\mu([0,t])-\mu([-t,0])) \, dt \\
& \le \delta \int_{0}^\infty \frac{2(3t^2-1)}{(t^2+1)^3}\mu([0,t])\,dt \\
& = O(\delta)
\end{split}
\end{equation}
The combination of~\eqref{dbl5}--\eqref{dbl7} proves~\eqref{dbl1}.
\end{proof}

\section{Proof of Theorem~\ref{suffgraph}}

By virtie of the doubling condition, the map $u\colon \R\to\R$ is $s$-quasisymmetric where $s$ is small if $\delta$ is small.
Thus we may consider the map $t\mapsto \Gamma (t) $ instead of the projection $\Gamma(t)\mapsto u(t)$.

Fix $a,b\in\R$, $a<b$.  The growth estimate for $\mu$,~\eqref{dbl2}, yields
\begin{equation}\label{fam1}
\sup_{[a,b]}\, \abs{u-u_{a,b}} \le C (u(b)-u(a))
\end{equation}
where $C=C(\delta)\to 0$ as $\delta\to 0$. On the other hand, the definition of $\Lambda_\mu$ implies
\begin{equation}\label{fam2}
\sup_{[a,b]}\, \abs{v-v_{a,b}} \le \norm{v}_{\Lambda_\mu} (u(b)-u(a)).
\end{equation}

When $\delta$ and $\norm{v}_{\Lambda_\mu}$ are small, Theorem~\ref{va39} implies that
$\Gamma$ is $s$-quasisymmetric with small~$s$.

Without the smallness condition, we can still conclude from~\eqref{fam1}--\eqref{fam2} that
\begin{equation}\label{suff0}
\abs{\Gamma(x)- \Gamma_{a,b}(x)} \le  K (u(b)-u(a)) , \qquad x\in [a,b],
\end{equation}
with $K$ independent of $a,b$. We shall  demonstrate the existence of a constant $H$ such that
\begin{equation}\label{suff1}
\abs{\Gamma (x)-\Gamma (a)}\le H\abs{\Gamma (x)-\Gamma (b)} \qquad \text{whenever } \abs{x-a}\le \abs{x-b}.
\end{equation}
The property~\eqref{suff1} implies the quasisymmetry of $\Gamma$~\cite[Theorem 10.19]{He}.
We split the proof of~\eqref{suff1} in two cases. If $a\le x\le b$, then~\eqref{suff0} yields
\[
\abs{\Gamma(x)- \Gamma(a)}  \le
 \frac{x-a}{b-x} \abs{\Gamma(x)- \Gamma(b)} + K(u(b)-u(a)).
\]
Since $\abs{x-a} \le \abs{x-b}$, the doubling condition implies 
\[u(b)-u(a) \le (2+\delta) \big(u(b)-u(x)\big),\]
hence 
\[\abs{\Gamma(x)- \Gamma(a)} \le [1+K(2 + \delta)] \abs{\Gamma(x)- \Gamma(b)}. \]
The other case to consider
is $x<a<b$. Now
\[
\abs{\Gamma(a)- \Gamma_{x,b}(a)} \le  K (u(x)-u(b)) \le K \abs{\Gamma (x)-\Gamma (b)}
\]
and
\[ \abs{\Gamma(x)- \Gamma_{x,b}(a)} \le \abs{\Gamma (x)-\Gamma (b)}. \]
Hence
\[
\abs{\Gamma(x)- \Gamma(a)}  \le  (K+1)  \abs{\Gamma(x)-\Gamma(b)}
\]
from which~\eqref{suff1} follows.
\qed

\section{Generalized variation of Zygmund functions}

Any function in the Zygmund class $\Lambda_*$ has a modulus of continuity of the form $C\delta \log (1/\delta)$ on every
finite interval~\cite[Theorem~II.3.4]{Zybook}. The example $g(x)=x\log x$ demonstrates that this modulus of continuity is best possible.
However, at most points the local modulus of continuity can be improved to $C\delta \sqrt{\log (1/\delta) \log \log (1/\delta)}$, see~\cite[Theorem~1]{AP}. Such an improvement is also possible on the average, i.e.,
in terms of generalized variation. This fact may be known, but being unable to find a reference, we give a proof.

\begin{proposition}\label{known}
Any function of class $\Lambda_*$ has locally finite $\Phi_q$ variation for every $q>1/2$.
Here  $\Phi_q$  is the gauge function from~\eqref{gauge}.
\end{proposition}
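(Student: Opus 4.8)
The plan is to reduce the estimate to dyadic scales and then exploit the martingale structure hidden in the Zygmund condition. After an affine change of variable I may assume I am on $[0,1]$, and after subtracting the affine interpolant $\ell$ of $g$ between the endpoints I may assume $g(0)=g(1)=0$: indeed $\ell$ is Lipschitz, hence of finite $\Phi_q$-variation since $\Phi_q(t)\le t$ for small $t$, and finiteness of $\Phi_q$-variation is stable under addition because $\Phi_q$ is convex and satisfies $\Phi_q(2t)\le C\Phi_q(t)$ for small $t$. Write $M$ for the Zygmund seminorm. For a dyadic interval $I=[a,b]$ of generation $n$ (length $2^{-n}$) set $\Delta_I=g(b)-g(a)$ and $X_I=2^n\Delta_I$, and for a point $x\in[0,1]$ let $X_n(x)=X_I$ for the generation-$n$ interval $I\ni x$. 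Applying \eqref{zydef} at the midpoint of each $I$ to its two children $I_\ell,I_r$ gives $\abs{X_{I_\ell}-X_I}=\abs{X_{I_r}-X_I}\le M$, while $\tfrac12(X_{I_\ell}+X_{I_r})=X_I$. Thus, with $x$ uniformly distributed, $(X_n)$ is a dyadic martingale with $X_0=0$ and increments bounded by $M$.

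The first genuine step is to reduce the $\Phi_q$-variation over arbitrary partitions to the dyadic sums $\Sigma_N:=\sum_{\abs{I}=2^{-N}}\Phi_q(\abs{\Delta_I})$, the sum over all $2^N$ dyadic intervals of generation $N$. Since $\Phi_q$ is convex with $\Phi_q(0)=0$, it is superadditive, so refining a partition never decreases $\sum_j\Phi_q(\abs{g(x_j)-g(x_{j-1})})$. Given any partition $P$, I pass to its common refinement with the generation-$N$ dyadic grid; this refinement dominates the $P$-sum and differs from $\Sigma_N$ only on the at most $2\abs{P}$ subintervals produced inside dyadic intervals that contain a point of $P$. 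Each such subinterval has length $\le 2^{-N}$, so by the $\Lambda_*$ modulus of continuity its $g$-increment is $\le C2^{-N}N$, contributing at most $\Phi_q(C2^{-N}N)$, which tends to $0$. Letting $N\to\infty$ gives $\sum_P\Phi_q\le\sup_N\Sigma_N$, so it suffices to bound $\Sigma_N$ uniformly.

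The heart of the matter is the estimate of $\Sigma_N$, and this is where the threshold $q>1/2$ appears. Because $\abs{X_I}\le MN$, each increment satisfies $\abs{\Delta_I}=2^{-N}\abs{X_I}\le MN2^{-N}$, whence $\log(1/\abs{\Delta_I})\ge N\log 2-\log(MN)\ge\tfrac12N\log2$ once $N$ is large; in that range $\Phi_q(\abs{\Delta_I})=\abs{\Delta_I}\,(\log(1/\abs{\Delta_I}))^{-q}\le 2^{-N}\abs{X_I}\,(\tfrac12N\log2)^{-q}$. Summing over the $2^N$ intervals and using $\sum_I 2^{-N}\abs{X_I}=\norm{X_N}_{L^1}\le\norm{X_N}_{L^2}$ gives $\Sigma_N\le C_q N^{-q}\norm{X_N}_{L^2}$. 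Finally, orthogonality of martingale differences yields $\norm{X_N}_{L^2}^2=\sum_{k<N}\norm{X_{k+1}-X_k}_{L^2}^2\le M^2N$, so $\Sigma_N\le C_q M\,N^{1/2-q}$. For $q>1/2$ this is bounded (indeed it tends to $0$), and the finitely many small $N$ give finite sums because $g$ is bounded; hence $\sup_N\Sigma_N<\infty$.

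I expect the main obstacle to be making the reduction step airtight rather than the martingale estimate itself: one must justify the superadditivity/refinement bookkeeping and verify that the splitting error genuinely vanishes uniformly, and one must check that the elementary bound $\Phi_q(\abs{\Delta_I})\le C_q N^{-q}2^{-N}\abs{X_I}$ holds uniformly over all dyadic $I$ of generation $N$. The conceptual point worth highlighting is that the only input beyond the martingale property is the $L^2$ bound $\norm{X_N}_{L^2}\le M\sqrt N$, i.e.\ the square function, and this is exactly what produces $N^{1/2-q}$; recovering any exponent below $1/2$ would require the generic increment to be smaller than $\sqrt N$, which a Zygmund function need not satisfy.
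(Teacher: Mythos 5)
Your martingale construction and the dyadic estimate are correct: the Zygmund condition at midpoints does make $X_n$ a dyadic martingale with increments bounded by $M$, orthogonality gives $\norm{X_N}_{L^2}\le M\sqrt N$, and the bound $\Sigma_N\le C_q M N^{1/2-q}$ follows. This is the same square-function philosophy that drives the paper's proof. However, the reduction of the true $\Phi_q$-variation to the dyadic sums $\Sigma_N$ rests on a false claim. Superadditivity of $\Phi_q$ says $\Phi_q(s)+\Phi_q(t)\le\Phi_q(s+t)$, so splitting an increment across which $g$ moves monotonically \emph{decreases} the $\Phi_q$-sum; refinement can therefore lower the sum, and your inequality $\sum_P\Phi_q\le\sum_{P\vee D_N}\Phi_q$ fails. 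A blunt counterexample is $g(x)=x$ (Zygmund seminorm $0$): for $P=\{0,1\}$ the $P$-sum is $\Phi_q(1)>0$, while the generation-$N$ dyadic sum is $2^N\Phi_q(2^{-N})=(N\log 2)^{-q}\to 0$, so $\sup_P\sum_P\Phi_q$ is not bounded by $\sup_N\Sigma_N+o(1)$. Nor is this a repairable bookkeeping issue within your scheme: boundedness of dyadic convex-variation sums simply does not control the variation over arbitrary partitions (classically, Brownian motion has dyadic quadratic-variation sums converging a.s., yet its true quadratic variation, the supremum over all partitions, is a.s. infinite). Some input beyond $\sup_N\Sigma_N$ is unavoidable.

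The paper avoids this trap by working with arbitrary partitions from the start: a pointwise insertion inequality shows that adding a point $x$ to an interval $[a,b]$ increases the discrete energy $\sum_j\abs{\Delta g_j}^2/\Delta x_j$ by at most $C(b-a)$, whence this energy is $O(\log(N+1))$ for any $N$-interval partition; Cauchy--Schwarz gives $\sum_j\abs{\Delta g_j}\le C\log^{1/2}(N+1)$, and a lemma from~\cite{KO} converts an $\ell^1$ bound of order $\log^{p}$ (graded by the number of partition points) into finite $\Phi_q$-variation for $q>p$. Your martingale estimate can be salvaged along the same lines, because $\ell^1$ sums, unlike $\Phi_q$-sums, \emph{do} increase under refinement: given an arbitrary partition with $K$ intervals, snap its points to the dyadic grid of generation $N\approx 2\log_2 K$; the snapping error is at most $CKN2^{-N}=o(1)$ by the log-Lipschitz modulus of continuity, and the snapped $\ell^1$ sum is at most $\norm{X_N}_{L^1}\le M\sqrt N\le CM\sqrt{\log K}$ by the triangle inequality. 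That recovers the paper's key estimate $\sum_j\abs{\Delta g_j}\le C\log^{1/2}(K+1)$ for arbitrary partitions, after which you still need the conversion step (your pointwise bound $\Phi_q(t)=t(\log 1/t)^{-q}$ can play that role, but it must be applied to arbitrary partitions graded by cardinality, not to the dyadic grid), which is precisely the content of the cited Lemma~\ref{elementary}.
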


We need a lemma.

\begin{lemma}\label{elementary}~\cite[Lemma 3.4]{KO}. If a function $g\colon [a,b]\to \R$ satisfies
\[
\sum_{j=1}^N \abs{g(x_j)-g(x_{j-1})} \le C \log^{p}(N+1)
\]
for any partition $a=x_0<\dots<x_N=b$, then $g$ has finite $\Phi_q$ variation for every $q>p$.
\end{lemma}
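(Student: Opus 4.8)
The plan is to pass to the \emph{sorted} increments of an arbitrary partition, control their partial sums through the hypothesis, and then estimate the $\Phi_q$-sum by summation by parts. Fix a partition $a=x_0<\dots<x_N=b$, write $d_j=\abs{g(x_j)-g(x_{j-1})}$ (discarding any vanishing increments, which contribute nothing), and rearrange these into decreasing order $d_{(1)}\ge d_{(2)}\ge\dots\ge d_{(N)}$. Set $S_m=d_{(1)}+\dots+d_{(m)}$. The first step is the partial-sum bound
\[
S_m\le C'\log^p(m+1),\qquad m=1,\dots,N.
\]
I would obtain this by noting that the $m$ largest increments arise from $m$ pairwise disjoint subintervals of $[a,b]$; adjoining their endpoints to $\{a,b\}$ yields a coarser partition with at most $2m+1$ pieces, whose total increment is at least $S_m$. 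Applying the hypothesis to this coarse partition gives $S_m\le C\log^p(2m+2)\le C'\log^p(m+1)$.

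Two consequences follow. First, since $m\,d_{(m)}\le S_m$, we get $d_{(m)}\le C'\log^p(m+1)/m\to 0$; hence for all $m$ beyond a fixed $m_0$ the increment $d_{(m)}$ is small enough that the explicit form $\Phi_q(t)=t/(\log(1/t))^q$ applies and $\log(1/d_{(m)})\ge c\log m$. Second, writing $\psi(t)=\Phi_q(t)/t$ — which is increasing, since $\Phi_q$ is convex with $\Phi_q(0)=0$ — the quantity $\psi(d_{(m)})$ is decreasing in $m$ and satisfies $\psi(d_{(m)})\le c/(\log m)^q$ for $m\ge m_0$. The finitely many terms with $m<m_0$ contribute a bounded amount because $\Phi_q$ is finite, so it remains to bound $\sum_{m\ge m_0}\Phi_q(d_{(m)})=\sum_{m\ge m_0}\psi(d_{(m)})(S_m-S_{m-1})$.

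The decisive step is summation by parts. A first Abel summation moves the difference onto $\psi$, producing a boundary term $\psi(d_{(N)})\,S_N\lesssim \log^p(N+1)/(\log N)^q$, which stays bounded (indeed tends to $0$) for $q>p$, together with $\sum_m S_m\bigl(\psi(d_{(m)})-\psi(d_{(m+1)})\bigr)$. Inserting $S_m\le C'\log^p(m+1)$ and summing by parts once more transfers the difference onto $\log^p(m+1)$, whose consecutive increments are of size $\log^{p-1}(m)/m$; combined with $\psi(d_{(m)})\lesssim(\log m)^{-q}$ this reduces everything to the convergence of $\sum_m m^{-1}(\log m)^{-(q-p+1)}$, which holds precisely when $q-p+1>1$, i.e. $q>p$. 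Since the resulting bound is independent of $N$ and of the partition, the $\Phi_q$-variation is finite.

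The main obstacle is sharpness. The crude per-term estimate $\Phi_q(d_{(m)})\le\Phi_q\bigl(C'\log^p(m+1)/m\bigr)$ only yields $\sum_m m^{-1}(\log m)^{p-q}$, forcing the suboptimal threshold $q>p+1$; the individual bound and the partial-sum bound are in fact mutually inconsistent if pushed to equality, which signals that the partial sums are the binding constraint. Reaching the stated range $q>p$ requires exploiting $S_m\le C'\log^p(m+1)$ \emph{collectively} through summation by parts rather than term by term, and it is this that supplies the extra logarithmic factor in the denominator. The remaining ingredients — the monotonicity of $\psi(t)=\Phi_q(t)/t$ and the coarsening bound for $S_m$ — are routine and would be checked in passing.
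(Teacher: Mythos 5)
The paper itself offers no proof of this lemma---it is imported verbatim from \cite[Lemma 3.4]{KO}---so there is no in-paper argument to compare against; judged on its own, your proof is correct. The coarsening step holds up: the $m$ selected intervals, having pairwise disjoint interiors, become cells of the partition generated by their endpoints together with $\{a,b\}$, which has at most $2m+1$ cells, whence $S_m\le C\log^p(2m+2)\le C\,2^p\log^p(m+1)$. The two points on which the summation-by-parts scheme genuinely rests are both in place: the monotonicity of $\psi(t)=\Phi_q(t)/t$ (from convexity and $\Phi_q(0)=0$) makes the differences $\psi(d_{(m)})-\psi(d_{(m+1)})$ nonnegative, which is precisely what licenses inserting the upper bound $S_m\le C'\log^p(m+1)$ after the first Abel summation; and all constants, including the threshold $m_0$ beyond which $d_{(m)}\le C'\log^p(m+1)/m$ forces $\psi(d_{(m)})\le c(\log m)^{-q}$, depend only on $C,p,q$ and the gauge, so the resulting bound is uniform over partitions, as required for finite $\Phi_q$-variation. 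Your double Abel summation is the hands-on, discrete integration-by-parts form of the Tomi\'c--Weyl weak-majorization inequality: since $\sum_{i\le m}d_{(i)}\le\sum_{i\le m}e_i$ for the (eventually decreasing) extremal sequence $e_m:=C'\bigl(\log^p(m+1)-\log^p m\bigr)\asymp \log^{p-1}(m)/m$, convexity of $\Phi_q$ gives $\sum_m\Phi_q(d_{(m)})\le\sum_m\Phi_q(e_m)\lesssim\sum_m m^{-1}(\log m)^{p-1-q}<\infty$ exactly when $q>p$; comparing the sorted increments with this extremal sequence is the standard route to the sharp threshold, and your version proves the same comparison by unrolling the Abel summations explicitly. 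Your diagnosis of why the per-term bound $d_{(m)}\lesssim\log^p(m+1)/m$ alone only yields $q>p+1$ is also accurate: the binding constraint is the partial sums, not the individual increments, and exploiting it collectively is the one nontrivial idea in the lemma.
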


\begin{proof}[Proof of Proposition~\ref{known}]  Let $g\in \Lambda_*$. We claim that there exists a constant $C$ such that for any triple $a<x<b$
\begin{equation}\label{gr4new}
\frac{(g(x)-g(a))^2}{x-a} + \frac{(g(x)-g(b))^2}{b-x} \le \frac{(g(b)-g(a))^2}{b-a} + C (b-a).
\end{equation}
Using the linear interpolant~\eqref{gab} we rewrite the left-hand side of~\eqref{gr4new} in terms of the difference $\delta:=g(x)-g_{ab}(x)$:
\[
\begin{split}
 \frac{(g(x)-g(a))^2}{x-a} &+ \frac{(g(x)-g(b))^2}{b-x}\\
&= \frac{\delta^2}{x-a}+\frac{\delta^2}{b-x} + \frac{(g_{ab}(x)-g(a))^2}{x-a} + \frac{(g_{ab}(x)-g(b))^2}{b-x} \\
& = \frac{\delta^2}{x-a}+\frac{\delta^2}{b-x} + \frac{(g(b)-g(a))^2}{b-a}
\end{split}
\]
It remains to prove that
\begin{equation}\label{gr8}
\frac{\delta^2}{\min(x-a,b-x)} \le C (b-a).
\end{equation}
Recall that $\delta\le C(b-a)$ by~\eqref{gzyd2}.
This immediately implies~\eqref{gr8} when $(x-a)$ is comparable to $(b-x)$. If $x$ is very close
to, say, $a$, then we use the  log-Lipschitz estimate $\delta\le C(x-a)|\log (x-a)|$, see~\cite[Proposition~1]{CD}.
Thus~\eqref{gr8} holds in either case.

Repeated application of~\eqref{gr4new} shows that for any partition $x_0,\dots, x_N$ of the interval $[a,b]$ we have
\begin{equation*}
\sum_{j=1}^N \frac{\abs{g(x_j)-g(x_{j-1})}^2}{x_j-x_{j-1}} \le C \log (N+1).
\end{equation*}
where $C$ is independent of $N$. The Cauchy-Schwarz inequality yields
\begin{equation*}
\sum_{j=1}^N \abs{g(x_j)-g(x_{j-1})} \le C \log^{1/2}(N+1),
\end{equation*}
and Lemma~\ref{elementary} completes the proof.
\end{proof}

Turning to the generalized Zygmund class $\Lambda_\mu$, we immediately find that the modulus of continuity is not log-Lipschitz in general. Indeed, $\Lambda_\mu$ always contains an antiderivative of $\mu$.
On the other hand, a version of Proposition~\ref{known} holds in this generality, albeit with a worse exponent.

\begin{proposition}\label{notknown} Let $\mu$ be a nonatomic Radon measure on $\R$.
Any function of class $\Lambda_\mu$ has locally finite $\Phi_q$ variation for every $q>1$.
\end{proposition}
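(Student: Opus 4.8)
The plan is to reduce the claim to Lemma~\ref{elementary} with the exponent $p=1$. Concretely, I will fix a bounded interval $[a,b]$ and produce a constant $C=C([a,b],\mu,M)$, where $M$ is the $\Lambda_\mu$-seminorm of $g$, such that
\[
\sum_{j=1}^N \abs{g(x_j)-g(x_{j-1})} \le C\log(N+1)
\]
for every partition $a=x_0<\dots<x_N=b$. Lemma~\ref{elementary} then yields finite $\Phi_q$ variation for every $q>1$, and since $[a,b]$ is arbitrary this gives local finiteness. The only structural input I will use is the deviation estimate~\eqref{gzyd2}, namely $\sup_{[c,d]}\abs{g-g_{cd}}\le M\mu([c,d])$, which holds at every interior point of $[c,d]$, not merely at its midpoint.

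The key device is a \emph{balanced} binary tree built on the indices of the partition. To each node I attach a block $[x_p,x_r]$ and the signed increment $\Delta_{p,r}=g(x_r)-g(x_p)$; the root is $[x_0,x_N]$ and the leaves are the elementary intervals $[x_{j-1},x_j]$. I split a node at an interior partition point $x_q$ with $q$ close to $(p+r)/2$, so that the two children carry roughly half of the remaining intervals and the tree has depth $D=\lceil\log_2 N\rceil$. Writing $\theta=(x_q-x_p)/(x_r-x_p)$ and $\varepsilon_q=g(x_q)-g_{x_px_r}(x_q)$, the identities $g(x_q)-g(x_p)=\theta\Delta_{p,r}+\varepsilon_q$ and $g(x_r)-g(x_q)=(1-\theta)\Delta_{p,r}-\varepsilon_q$, together with~\eqref{gzyd2}, give the node recursion
\[
\abs{g(x_q)-g(x_p)}+\abs{g(x_r)-g(x_q)}\le \abs{\Delta_{p,r}}+2\abs{\varepsilon_q}\le \abs{\Delta_{p,r}}+2M\mu([x_p,x_r]).
\]

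Telescoping this inequality from the root down to the leaves, a routine induction on the tree shows that
\[
\sum_{j=1}^N \abs{g(x_j)-g(x_{j-1})}\le \abs{g(b)-g(a)}+2M\sum_{v\text{ internal}}\mu(I_v),
\]
where $I_v$ denotes the block attached to node $v$. Because the blocks at any fixed depth of the tree are contiguous, pairwise disjoint, and cover $[a,b]$, and because $\mu$ is nonatomic (so shared endpoints contribute nothing), the $\mu$-masses at each depth sum to $\mu([a,b])$; summing over the $D$ internal levels bounds the right-hand side by $\abs{g(b)-g(a)}+2M\mu([a,b])\lceil\log_2 N\rceil$, which is the desired $C\log(N+1)$.

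I expect the main obstacle to be precisely the balancing step: the telescoping itself is harmless, but it produces a useful bound only if the tree has depth $O(\log N)$, so one must split at a genuinely central partition index rather than peel off one interval at a time, and one must verify that the internal blocks at a fixed depth partition $[a,b]$ so that their $\mu$-masses add up to $\mu([a,b])$ instead of accumulating. This is also where the result is necessarily weaker than Proposition~\ref{known}: lacking the log-Lipschitz estimate that powered~\eqref{gr8}, I cannot control the \emph{squared} increments and invoke Cauchy--Schwarz, so I only reach the growth rate $\log(N+1)$ (that is, $p=1$) rather than $\log^{1/2}(N+1)$, which is exactly what forces the exponent $q>1$.
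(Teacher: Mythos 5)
Your proof is correct and is essentially the paper's own argument: your node recursion is exactly the paper's key inequality $\abs{g(x_q)-g(x_p)}+\abs{g(x_r)-g(x_q)}\le\abs{g(x_r)-g(x_p)}+C\mu([x_p,x_r])$, derived in the same way from~\eqref{gzyd2}, and your balanced binary tree is just the top-down view of the paper's bottom-up halving of a $2^m$-point partition, with the same per-level mass bound $\mu([a,b])$, the same $O(\log N)$ depth count, and the same final appeal to Lemma~\ref{elementary} with $p=1$. The only difference is organizational: splitting at a median index handles arbitrary $N$ directly, whereas the paper works with $N=2^m$ and passes to general $N$ implicitly.
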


\begin{proof}  Let $g\in \Lambda_\mu$. We claim that there exists a constant $C$ such that for any triple $a<x<b$
\begin{equation}\label{abc}
\abs{g(x)-g(a)}+\abs{g(x)-g(b)} \le \abs{g(a)-g(b)} + C \mu([a,b]).
\end{equation}
Indeed, in terms of the linear interpolant~\eqref{gab} we have
\[
\begin{split}
\abs{g(x)-g(a)}& +\abs{g(x)-g(b)}\\ &\le \abs{g_{ab}(x)-g(a)}+\abs{g_{ab}(x)-g(b)} + 2\abs{g(x)-g_{ab}(x)} \\
&= \abs{g(a)-g(b)} + 2\abs{g(x)-g_{ab}(x)}
\end{split}
\]
where the last term is controlled by $\mu([a,b])$ by the definition of $\Lambda_\mu$.

Consider a partition $a=x_0<\dots<x_N=b$ where $N=2^m$. Applying~\eqref{abc} to the triples like $x_0,x_1,x_2$, we
obtain
\[
\sum_{j=1}^{2^m} \abs{g(x_j)-g(x_{j-1})} \le C\mu([a,b])+ \sum_{j=1}^{2^{m-1}} \abs{g(x_j)-g(x_{j-1})}
\]
After $m$ iterations of this process the estimate becomes
\[
\sum_{j=1}^{2^m} \abs{g(x_j)-g(x_{j-1})} \le  Cm \mu([a,b]) + \abs{g(a)-g(b)}.
\]
Thus, for any $N$ point partition of $[a,b]$ we have the estimate
\begin{equation}\label{not2}
\sum_{j=1}^{N} \abs{g(x_j)-g(x_{j-1})} \le C\log(N+1)
\end{equation}
where $C$ is independent of $N$. An application of Lemma~\ref{elementary} completes the proof.
\end{proof}

In the next section we prove that Proposition~\ref{notknown} is essentially sharp, even if the measure $\mu$
is assumed to be doubling with a small constant.

\section{Infinite generalized variation}

The principal result of this section concerns the class $\Lambda_\mu$ for singular measures $\mu$.

\begin{theorem}\label{hard}
Let $\delta>0$. There exists a Radon measure $\mu$ on $\R$ with the doubling property~\eqref{dbl0} such that
the class $\Lambda_\mu$ contains a function which has infinite $\Phi_q$-variation on $[a,b]$ for any $0<q<1$ and any $a<b$.
\end{theorem}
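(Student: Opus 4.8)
The plan is to build the measure $\mu$ and the function $g\in\Lambda_\mu$ simultaneously from a self-similar, multiscale construction, and then to deduce infinite $\Phi_q$-variation from a single lower bound on the ordinary variation along a well-chosen sequence of partitions. For the measure I would take a self-similar binomial-type measure on $[0,1]$: fix a ratio $p\in(\tfrac12,1)$, split each dyadic cell into its two halves, and distribute the parent mass in the proportion $p:(1-p)$. Such a $\mu$ is singular, yet it satisfies the doubling condition~\eqref{dbl0}, and its doubling constant can be made as close to $1$ as desired by taking $p\to\tfrac12$; thus, given $\delta$, I fix $p=p(\delta)$ close to $\tfrac12$ so that the constant in~\eqref{dbl0} is $1+\delta$. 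I would then extend $\mu$ (and later $g$) from $[0,1]$ to all of $\R$ by a self-similar, periodic repetition preserving doubling with a comparable constant. The one feature retained for every $p\neq\tfrac12$ is that $\mu$ is genuinely \emph{singular}.

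Next I would define $g$ as a lacunary sum of bumps adapted to the cells: to each dyadic cell $I$ associate the tent (Faber--Schauder) function $\psi_I$ peaking at the midpoint of $I$, and set $g=\sum_I \varepsilon_I\,\mu(I)\,\psi_I$, where the signs $\varepsilon_I\in\{-1,+1\}$ are chosen \emph{coherently} (not at random) so as to reinforce the oscillation across scales. The membership $g\in\Lambda_\mu$ is the routine part. To estimate the symmetric second difference in~\eqref{gzydef} at an argument and scale dictated by a cell, the dominant contribution is the single bump on that cell, of size $\asymp\mu(I)$; the finer bumps contribute a geometrically convergent series, and the coarser bumps are locally affine, so both are absorbed, and the doubling property in the quantitative form~\eqref{dbl2} bounds the total by $M\,\mu([x-h,x+h])$. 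This yields $g\in\Lambda_\mu$ with finite seminorm.

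The crux is the lower bound for the variation. By the convexity of $\Phi_q$, if a partition $a=x_0<\dots<x_N=b$ satisfies $\sum_{j}\abs{g(x_j)-g(x_{j-1})}\ge c\log N$, then Jensen's inequality gives
\begin{equation*}
\sum_{j=1}^N \Phi_q\big(\abs{g(x_j)-g(x_{j-1})}\big)\ge N\,\Phi_q\!\Big(\tfrac{c\log N}{N}\Big)\sim c\,(\log N)^{1-q},
\end{equation*}
which tends to $\infty$ for every $q<1$, since $\Phi_q(t)=t/(\log 1/t)^q$ for small $t$ by~\eqref{gauge}. Thus it suffices to produce, for arbitrarily large $N$, a partition whose ordinary variation is at least $c\log N$, linear in $\log N$. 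This is the maximal order permitted by Proposition~\ref{notknown}, so the goal is to saturate that bound, and I expect this to be the main obstacle and the point where singularity of $\mu$ is indispensable: for Lebesgue measure the proof of Proposition~\ref{known} shows the variation along any $N$-point partition is only $O(\sqrt{\log N})$, because the $\log N$ scales interfere like a random walk. To reach the full $\log N$ I would measure the variation along partitions adapted to $\mu$ --- the equal-mass partitions, i.e. images of uniform partitions under the distribution function $x\mapsto\mu([0,x])$, rather than the Euclidean-uniform ones --- and choose the signs $\varepsilon_I$ so that the increments contributed by the $\log_2 N$ successive scales add with a definite sign instead of cancelling. Proving that each of these $m\asymp\log N$ scales contributes a fixed positive amount without cancellation is the technical heart; the resulting constant degrades as $p\to\tfrac12$ but stays positive for every fixed $p\neq\tfrac12$, which is all that is needed.

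Finally I would localize. Because the construction is self-similar, the restriction of $g$ to any dyadic cell is, up to an affine map, a rescaled copy of $g$ on $[0,1]$, so the variation lower bound holds inside every cell. Since every interval $[a,b]$ contains some cell and the $\Phi_q$-variation in~\eqref{finva} is monotone under inclusion of intervals, the infinite $\Phi_q$-variation propagates to every $[a,b]$ and every $0<q<1$, completing the proof of Theorem~\ref{hard}.
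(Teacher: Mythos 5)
Your overall skeleton --- a singular doubling measure, a bump series $g\in\Lambda_\mu$, the Jensen reduction showing that variation at least $c\log N$ along an $N$-point partition forces $\sum_j\Phi_q(\abs{g(x_j)-g(x_{j-1})})\gtrsim(\log N)^{1-q}\to\infty$, and localization by self-similarity --- is exactly the architecture of the paper's proof. But your two concrete choices fail, and they fail at precisely the points you call routine or defer as the technical heart. First, the binomial measure is \emph{not doubling} for any $p\neq\nicefrac{1}{2}$: it fails~\eqref{dbl0} with every constant, so the claim that its doubling constant tends to $1$ as $p\to\nicefrac{1}{2}$ is false. Doubling breaks at dyadic points, where the asymmetry compounds: the two cells of length $2^{-n}$ adjacent to $\nicefrac{1}{2}$ carry masses $p(1-p)^{n-1}$ and $(1-p)p^{n-1}$, whose ratio $(p/(1-p))^{n-2}$ is unbounded in $n$. (The naive dyadic Riesz product $\prod_k(1+\gamma r_k)$ with Rademacher functions $r_k$ has the same defect.) This is exactly the problem the paper's $4$-adic pattern $(0,1,-1,0)$ is built to avoid: each factor oscillates only on the middle two quarters of its cell and is neutral on the outer quarters, so finer factors are identically neutral near every discontinuity of coarser ones (property~\eqref{rho4}), and only because of this does the product measure stay doubling. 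A singular \emph{and} doubling measure is a nontrivial design constraint, not a free consequence of self-similar splitting.

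Second, the sign mechanism you propose for the variation lower bound is backwards. Take your own $g=\sum_I\varepsilon_I\mu(I)\psi_I$ with the most coherent choice $\varepsilon_I\equiv+1$. For the binomial measure, the tent correction of the distribution function $F(x)=\mu([0,x])$ on each cell $I$ has height exactly $(p-\nicefrac{1}{2})\mu(I)$, so the series telescopes: $g=\tfrac{2}{2p-1}(F(x)-x)$ on $[0,1]$, a difference of two increasing functions, of total variation at most $4/(2p-1)$, hence of \emph{finite} $\Phi_q$-variation for every $q>0$. Coherence across scales is precisely what produces telescoping, i.e.\ maximal cancellation; and passing to equal-mass partitions cannot help, since the cancellation sits in $g$ itself, not in the choice of partition. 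What the lower bound needs is the opposite of coherence: independence between the oscillating signs and the weights. The paper engineers this by splitting scales --- the measure is a Riesz product over the odd $4$-adic scales only, while $g=\sum_n R_{2n}v_n$ oscillates only at the even scales --- so that on suitable sets all cross terms integrate to zero (properties~\eqref{rho7}--\eqref{rho8}) and the problem reduces to $\int_0^1\max_{n\le m}v_n\,dx\ge cm$, proved via Bernstein's inequality and a maximal-function lower bound. In your dyadic setting the analogous fix is \emph{random} signs, which you explicitly exclude: Khintchine's inequality gives $\mathbf{E}_\varepsilon\abs{g_m'(x)}\ge c\max_{n\le m}2^n\mu(I_n(x))$ pointwise, where $I_n(x)$ is the dyadic cell of length $2^{-n}$ containing $x$ --- and even then one must still prove the maximal-function bound and extract one sign choice valid at all scales. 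As it stands, the step you defer is not merely unproven: the route you sketch for it provably produces a function of bounded variation.
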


Together with previous results this quickly yields Theorem~\ref{varrqc}.

\begin{proof}[Proof of Theorem~\ref{varrqc}]
We use the function $v\in \Lambda_\mu$ provided by Theorem~\ref{hard}, scaling it down to make the $\Lambda_\mu$ seminorm of $v$ as small as needed for Theorem~\ref{suffgraph}.  Then use Theorem~\ref{smallk} to produce
the desired reduced quasiconformal map.
\end{proof}

\begin{proof}[Proof of Theorem~\ref{hard}] Consider $4$-adic intervals
\[
I_{n,j} =\{x\colon 0\le 4^nx-j<1\} = \bigg[\frac{j}{4^n}, \frac{j+1}{4^n}\bigg),\qquad n=1,2,\dots, \ j\in \Z,
\]
and define, for $n\ge 1$, the Rademacher-type functions
\[
\rho_n(x) = \begin{cases}  0, &\qquad x\in I_{n,j}, \quad j\equiv 0,3 \mod 4 \\
1, &\qquad x\in I_{n,j}, \quad j\equiv 1 \mod 4 \\
-1, &\qquad x\in I_{n,j}, \quad j\equiv 2 \mod 4
\end{cases}
\]
For future references we record several properties of the family $\{\rho_n\}$.
\begin{enumerate}[(i)]
\item $\rho_n$ is constant on $I_{m,j}$ when $m\ge n$;
\item $\rho_n$ has zero mean on $I_{m,j}$ when $m<n$.
\item\label{rho3} the set of discontinuities of $\rho_n$ is $\{ j\,  4^{-n} \colon n\ge 1, \ 4\nmid j \}$;
\item\label{rho4} if $\rho_n$ is discontinuous at $x$, then $\rho_m(y)=0$ whenever $m>n$ and $\abs{x-y}<4^{-m}$;
\item\label{rho5} the antiderivative $R_n(x):=\int_0^x \rho_n(t)\,dt$ is $4^{1-n}$-periodic and $\abs{R_n}\le 4^{-n}$;
\item\label{rho6} the product $R_n \rho_m$ is continuous on $\R$ provided that $m<n$;
\item\label{rho7} if $\Psi$ is a function of $\rho_1,\dots,\rho_{n-1},\rho_{n+1},\dots \rho_{m}$, then
\[
\int_{0}^{1} \Psi(x)\,dx = 4\int_{[0,1]\cap\{\rho_n=1\}} \Psi(x)\,dx.
\]
\item\label{rho8} Under the assumptions of~\eqref{rho7}, $\int_{0}^{1} \rho_n(x)\Psi(x)\,dx=0$.
\end{enumerate}

Fix a number $\gamma \in (0,1)$  and define for $n\ge 1$
\[
v_n(x)=\prod_{k=1}^n (1+\gamma \rho_{2k-1}(x))
\]
The measures $v_n(x)\, dx$ have a weak${}^*$ limit, denoted $\mu$.
It is routine to check that $\mu$ satisfies the doubling condition~\eqref{dbl0} where $\delta\to 0$
as $\gamma\to 0$.  Indeed, the weights $v_n$ are doubling with a uniformly controlled constant, and
$\mu(I)$  can be compared to $\int_I v_n$ as long as the length of $I$ is comparable to  $4^{-2n}$. See~\cite{Tu}.

Let us introduce
\begin{equation}\label{defineg}
g(x)=\sum_{n=1}^\infty R_{2n}(x) v_n(x)
\end{equation}
where $R_{2n}$ is the antiderivative of $\rho_{2n}$. Each summand is continuous by virtue of~\eqref{rho6}.
The property~\eqref{rho5} ensures that the series converges  uniformly and at an exponential rate.

\textbf{Step 1: $g\in\Lambda_\mu$.}  For this we will show that~\eqref{gzyd2} holds
for all  $a,b\in \R$  such that $a<b$. Since $g$ is bounded, it suffices  to consider the case $b-a<\nicefrac{1}{16}$.
Let $m$ be the greatest integer such that
\begin{equation}\label{sizeint}
b-a < 4^{-2m}.
\end{equation}
By virtue of~\eqref{rho5} the difference between $g$ and the partial sum
\[
g_m(x) = \sum_{n=1}^m R_{2n}(x) v_n(x)
\]
on the interval $[a,b]$ does not exceed
\[
\bigg(\sup_{[a,b]} v_m\bigg)  \sum_{n>m} 4^{-2n} (1+\gamma)^{n-m} \le C\, 4^{-2m} \sup_{[a,b]} v_m
\le C \mu([a,b]).
\]
Therefore, it suffices to prove the desired property~\eqref{gzyd2} for $g_m$. Differentiation of $g_m$ yields
\begin{equation}\label{gmder}
g_m'(x)=\sum_{n=1}^m \rho_{2n}(x) v_n(x)
\end{equation}
because $v_n$ is locally constant on the support of $R_{2n}$. If $g_m'$ is constant on $[a,b]$ then we are done.
Suppose otherwise. By virtue of~\eqref{rho3} the set of discontinuities of  $g_m'$ is a subset of
$\{j\, 4^{-2n} \colon 1\le n\le m, \ 4\nmid j\}$. Therefore $g_m'$ has exactly one point of discontinuity
on $[a,b]$, say $\theta =\ell \cdot 4^{-2r}$, $ 4\nmid \ell$. The oscillation of  $g_m'$ at this point is at most
$2 v_r(\theta)$. The property~\eqref{rho4} implies that $v_{m}(x)\equiv v_r(\theta)$ for $x\in [a,b]$.
Hence, the deviation of $g_m$ from an affine function on the interval $[a,b]$ does not exceed
\[
2 v_r(\theta) (b-a) =2 \int_a^b v_{m}(x)  \le C \mu([a,b])
\]
as desired.

\textbf{Step 2: the variation of $g$.} Fix $0<q<1$. We must show that $g$ has infinite $\Phi_q$-variation on every $4$-adic interval. It suffices to consider the interval $[0,1]$.
Note that $g$ coincides with the partial sum $g_m$ at all points of the form $j\,  4^{-2m}$, $j\in \Z$. Hence
\begin{equation}\label{gmder1}
\sum_{j=1}^{4^{2m}} \abs{g(j\,  4^{-2m}) - g((j-1)\,  4^{-2m}) } \ge \int_0^1 \abs{ g_m'(x)  }\,dx.
\end{equation}

Let $v_m^*=\max(v_1,\dots,v_m)$.
For $\lambda>0$ and $k=1,\dots,m$ define
\[
E_k(\lambda) =\{x\in [0,1] \colon v_k(x)=v_m^*(x)=\lambda, \ v_n(x)<\lambda \text{ for } n<k\}.
\]
By definition, the sets $E_k(\lambda)$ from a finite partition of the interval $[0,1]$.
We claim that
\begin{equation}\label{gmder2}
\int_{E_k(\lambda)} \abs{ g_m'(x)  }\,dx \ge \frac{\lambda}{4} \abs{E_k(\lambda)},
\end{equation}
where $\abs{\cdot}$ denotes the Lebesgue measure. To this end, restrict the set of integration to $E'=E_{k}(\lambda) \cap \{\rho_{2k}=1\}$. The property~\eqref{rho7}
implies $\abs{E'} = \nicefrac{1}{4}  \abs{E_k(\lambda)}$.  According to~\eqref{rho8}, 
\[
\int_{E'} \rho_{2n} v_n = \begin{cases} \lambda \abs{E'} \quad & \mbox{ if } n=k \\
0 & \text{ otherwise.}  \end{cases}
\]
From~\eqref{gmder} we obtain
\[
\int_{E'} \abs{ g_m'(x) }\,dx \ge \int_{E'} { g_m'(x) }\,dx =\lambda \abs{E'} = \frac{\lambda}{4} \abs{E_k(\lambda)}
\]
which proves~\eqref{gmder2}.

Summing~\eqref{gmder2} over all $k=1,\dots,m$ and all $\lambda>0$ yields
\begin{equation}\label{gmder3}
\int_0^1  \abs{ g_m'(x)  }\,dx \ge \frac{1}{4} \int_0^1 v_m^*(x)\,dx.
\end{equation}
We need a lemma, the proof of which is postponed to the end of this section.

\begin{lemma}\label{maxlem} There exists a positive constant $c>0$ such that
\begin{equation}\label{max1}
\int_0^1 v_m^*(x)\,dx \ge cm ,\qquad m=1,2,\dots
\end{equation}
\end{lemma}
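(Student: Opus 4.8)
The plan is to turn the evident $16$-adic self-similarity of the family $\{v_n\}$ into an additive recursion for the integrals in question. It is cleaner to work with $V_m^*:=\max_{0\le n\le m}v_n$, where the term $n=0$ contributes $v_0\equiv 1$. Since $0\le V_m^*-v_m^*=\max(1-v_m^*,0)\le 1$, one has $\int_0^1 V_m^*-1\le \int_0^1 v_m^*$, and so it suffices to prove $\int_0^1 V_m^*\ge 1+cm$.

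The first step is to record the scaling. Each $\rho_{2k-1}$ depends only on the $(2k-1)$-st base-$4$ digit of its argument, so descending two base-$4$ levels shifts the odd indices down by $2$. Precisely, let $I$ be the $16$-adic interval whose two leading base-$4$ digits are $(j_1,j_2)$, and let $y\in[0,1)$ be the affine coordinate on $I$; then $\rho_{2k-1}(x)=\rho_{2k-3}(y)$ for $k\ge2$, while $\rho_1(x)=\varphi(j_1)$, where $\varphi(0)=\varphi(3)=0$, $\varphi(1)=1$, $\varphi(2)=-1$. Hence $v_n(x)=(1+\gamma\varphi(j_1))\,v_{n-1}(y)$ for all $n\ge1$, and consequently
\[
V_m^*(x)=\max\bigl(1,\,(1+\gamma\varphi(j_1))\,V_{m-1}^*(y)\bigr).
\]

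Next I would integrate this identity over $[0,1]$. Splitting $[0,1]$ into its sixteen $16$-adic subintervals and grouping the four values of $j_2$ with each $j_1$ gives, with $A:=\int_0^1 V_{m-1}^*$,
\[
\int_0^1 V_m^*=\frac14\sum_{j_1=0}^{3}\int_0^1\max\bigl(1,(1+\gamma\varphi(j_1))V_{m-1}^*(y)\bigr)\,dy .
\]
Because $V_{m-1}^*\ge1$ pointwise, the cases $j_1\in\{0,3\}$ each yield $A$, the case $j_1=1$ yields $(1+\gamma)A$, and for $j_1=2$ I would write $\max(1,(1-\gamma)V_{m-1}^*)=(1-\gamma)V_{m-1}^*+(1-(1-\gamma)V_{m-1}^*)^+$. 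The four linear parts recombine to $[1+1+(1+\gamma)+(1-\gamma)]A=4A$, so after the factor $\tfrac14$ the recursion reads
\[
\int_0^1 V_m^*\ge \int_0^1 V_{m-1}^*+\frac14\int_0^1\bigl(1-(1-\gamma)V_{m-1}^*\bigr)^+\,dy .
\]
On the set $\{V_{m-1}^*=1\}$ the last integrand equals $\gamma$, so the gain is at least $\tfrac{\gamma}{4}\,\abs{\{V_{m-1}^*=1\}}$.

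The single substantial point, which I expect to be the main obstacle, is a uniform lower bound $\abs{\{V_{m-1}^*=1\}}\ge p_0>0$ with $p_0$ independent of $m$. Setting $S_n:=\log v_n=\sum_{k=1}^n\log(1+\gamma\rho_{2k-1})$, the independence of distinct base-$4$ digits makes $S_n$ a random walk whose i.i.d.\ increments take the values $0,\log(1+\gamma),\log(1-\gamma)$ on sets of Lebesgue measure $\tfrac12,\tfrac14,\tfrac14$ respectively, and $\{V_{m-1}^*=1\}=\{x:S_n(x)\le0\ \text{for}\ 1\le n\le m-1\}$. The drift $\tfrac14\log(1-\gamma^2)$ is strictly negative, so $S_n\to-\infty$ almost everywhere; were the first strict ascending ladder epoch finite on a set of full measure, iterating at successive ladder points would force $\sup_n S_n=+\infty$, a contradiction. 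Hence $\abs{\{x:S_n(x)\le0\ \text{for all}\ n\ge1\}}=p_0>0$, and a fortiori $\abs{\{V_{m-1}^*=1\}}\ge p_0$. Inserting this into the recursion yields $\int_0^1 V_m^*\ge 1+\tfrac{\gamma p_0}{4}\,m$, whence $\int_0^1 v_m^*\ge \tfrac{\gamma p_0}{4}\,m$ and the lemma follows with $c=\gamma p_0/4$.
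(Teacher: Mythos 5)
Your proposal is correct, but it takes a genuinely different route from the paper's. The paper argues probabilistically and analytically: Bernstein's inequality applied to $\log v_m$ gives $\abs{\{v_m\ge 1/4\}}\le e^{-cm}$; since $\int_0^1 v_m=1$, the mass of $v_m$ must then concentrate on sets where $v_m$ is large, yielding $\int_{\{v_m\ge\lambda\}}v_m\ge 1/2$ for all $1\le\lambda\le\frac14 e^{cm}$; finally a weak-type \emph{lower} bound for the maximal function (cited from Stein) plus integration in $\lambda$ over this exponentially long range produces the factor $m$ as a logarithm. You instead exploit the exact $16$-adic self-similarity of the weights: your digit identity $\rho_{2k-1}(x)=\rho_{2k-3}(y)$ and the resulting relation $V_m^*(x)=\max\bigl(1,(1+\gamma\varphi(j_1))V_{m-1}^*(y)\bigr)$ are correct, the integration over the sixteen subintervals does give the exact recursion $\int_0^1 V_m^*=\int_0^1 V_{m-1}^*+\frac14\int_0^1\bigl(1-(1-\gamma)V_{m-1}^*\bigr)^+$, and the reduction $\int_0^1 v_m^*\ge\int_0^1 V_m^*-1$ is valid. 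The one substantial ingredient you need, $\abs{\{V_{m-1}^*=1\}}\ge p_0>0$ uniformly in $m$, is indeed the classical fluctuation-theory fact that a random walk with strictly negative drift stays nonpositive for all time with positive probability, and your ladder-epoch contradiction argument for it is sound (the i.i.d.\ structure of the odd base-$4$ digits under Lebesgue measure justifies the strong Markov property). What each approach buys: yours is elementary and self-contained, avoiding both Bernstein's inequality and the maximal-function bound, and it isolates the source of the linear growth as a constant gain $\gamma p_0/4$ per scale; the paper's argument is less tied to the precise product structure --- it uses only the martingale/large-deviation behavior of $v_m$ --- and so is more robust under perturbation of the construction, at the cost of invoking two cited external results.
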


From ~\eqref{gmder1},~\eqref{gmder3} and~\eqref{max1} it follows that
\[
\sum_{j=1}^{4^{2m}} \Abs{g(j\, 4^{-2m}) - g((j-1)\, 4^{-2m}) } \ge  {cm}, \qquad m=1,2,\dots
\]
Jensen's inequality yields
\[
\sum_{j=1}^{4^{2m}} \Phi_q \left(\Abs{g(j\, 4^{-2m}) - g((j-1)\, 4^{-2m}) }\right) \ge
4^{2m} \Phi_q\Big(\frac{c m}{4^{2m}}\Big)   \sim m^{1-q}  \to \infty
\]
as $m\to \infty$.
\end{proof}

\begin{proof}[Proof of Lemma~\ref{maxlem}]
Introduce the random variables
\[X_k= \log (1+\gamma \rho_k) - \frac{1}{4}\log (1-\gamma^2)\]
with $[0,1]$ being the probability space. Since $X_k$ are independent, identically distributed, and have zero mean,
the large deviation bound (Bernstein's inequality~\cite[Theorem 5.11.4]{GSb}) yields
\begin{equation}\label{bern}
\mathbf P \left\{\sum_{k=1}^m X_{2k-1}> \log\frac{1}{4} - \frac{m}{4}\log (1-\gamma^2) \right\} \le e^{-cm}
\end{equation}
where $c>0$ depends only on $\gamma$. An equivalent form of~\eqref{bern} is
\begin{equation}\label{bern2}
\Abs{ \{x\in [0,1]\colon v_m\ge \nicefrac{1}{4}  \} }\le e^{-cm}.
\end{equation}
For $\lambda\ge 1$  let $A(\lambda)=\{x\in [0,1]\colon v_m(x)\ge \lambda\}$. The estimate~\eqref{bern2} yields
\begin{equation}\label{bern3}
\int_{[0,1]\setminus A(\lambda)} v_m \le \frac{1}{4} + \lambda e^{-cm}.
\end{equation}
The right-hand side of~\eqref{bern3} is less than $\nicefrac{1}{2}$ provided that
$\lambda \le \frac{1}{4}\, e^{cm}$. Hence
\begin{equation}\label{asy}
\int_{A(\lambda)} v_m \ge\frac{1}{2},\qquad 1\le \lambda \le \frac{1}{4} e^{cm}.
\end{equation}
Recall a lower  bound for maximal function~\cite[p.~32]{Stb}
\begin{equation}\label{bern4}
\Abs{\{x\in [0,1]\colon v_m^*(x) \ge c_1\lambda\}}\ge \frac{c_2}{\lambda}  \int_{A(\lambda)} v_m
\end{equation}
with universal constants $c_1,c_2>0$.
Integrating~\eqref{bern4} with respect to $\lambda$ and using~\eqref{asy}, we arrive at~\eqref{max1}.
\end{proof}

\bibliographystyle{amsplain}

\end{document}